\documentclass{siamart171218}
\usepackage{amsfonts}
\usepackage{graphicx}
\usepackage{epstopdf}
\ifpdf
\DeclareGraphicsExtensions{.eps,.pdf,.png,.jpg}
\else
\DeclareGraphicsExtensions{.eps}
\fi


\newsiamremark{remark}{Remark}
\newsiamremark{hypothesis}{Hypothesis}
\crefname{hypothesis}{Hypothesis}{Hypotheses}
\newsiamthm{claim}{Claim}

\headers{A Cucker-Smale model with decentralized formation control}{Y.-P. Choi, D. Kalise, J. Peszek and A. A. Peters}

\title{A collisionless singular Cucker-Smale model with decentralized formation control\thanks{Submitted to the editors \today.}}

\author{Young-Pil~Choi\thanks{Y-P. Choi is with the  Department of Mathematics,
		Yonsei University, 50 Yonsei-Ro, Seodaemun-Gu, Seoul 03722, Republic of Korea
		(\email{ypchoi@yonsei.ac.kr}).}
	\and Dante~Kalise\thanks{D. Kalise is with the School of Mathematical Sciences, University of Nottingham, University Park, Nottingham NG7 2QL, United Kingdom (\email{dante.kalise@nottingham.ac.uk}).}
	\and Jan~Peszek\thanks{J. Peszek is with the Center for Scientific Computation and Mathematical Modeling (CSCAMM), University of Maryland, College Park, MD 20742-4015, USA, and with the Institute of Mathematics of the Polish Academy of Sciences, ul. \'Sniadeckich 8
		00-656 Warszawa, Poland (\email{j.peszek@mimuw.edu.pl}).}
	\and Andr\'es~A.~Peters\thanks{A. Peters is with the Department of Electricity, Universidad Tecnológica Metropolitana, Av. Jose Pedro Alessandri 1242, 7800002, Santiago, Chile
		 (\email{a.petersr@utem.cl})}
}


\usepackage{amsopn}

\usepackage{amssymb,graphicx}
\usepackage{amsxtra}
\usepackage{indentfirst}
\usepackage{hyperref}
\usepackage{ifthen}
\usepackage{url}
\usepackage{colortbl}
\usepackage{color}
\definecolor{black}{rgb}{0.0, 0.0, 0.0}
\definecolor{red}{rgb}{0.82, 0.0, 0.0}

\def\({\begin{eqnarray}}
\def\){\end{eqnarray}}
\def\[{\begin{eqnarray*}}
\def\]{\end{eqnarray*}}
\newcommand{\R}{\mathbb R}

\newcommand{\bq}{\begin{equation}}
\newcommand{\eq}{\end{equation}}

\newcommand{\lt}{\left}
\newcommand{\rt}{\right}
\newcommand{\lal}{\langle}
\newcommand{\ral}{\rangle}
\newcommand{\pa}{\partial}


\begin{document}
%

%
%
%

\maketitle

\begin{abstract}
We address the design of decentralized feedback control laws inducing consensus and prescribed spatial patterns over a singular interacting particle system of Cucker-Smale type. The control design consists of a feedback term regulating the distance between each agent and pre-assigned subset of neighbours. Such a design represents a multidimensional extension of existing control laws for 1d platoon formation control. For the proposed controller we study consensus emergence, collision-avoidance and formation control features in terms of energy estimates for the closed-loop system. Numerical experiments in 1, 2 and 3 dimensions assess the different features of the proposed design.
\end{abstract}

\begin{keywords}
	Multi-agent systems, decentralized control, flocking, pattern formation.
\end{keywords}

\begin{AMS}
34H05, 93A15, 93C10 
\end{AMS}

\section{Introduction}
Multi-agent systems (MAS) provide a versatile frame\-work for modelling different challenges arising in Science and Engineering, such as collective animal and human behaviour \cite{sumpter,vicsek}, dynamic networks \cite{olfati}, and autonomous vehicles \cite{balch}, among many others. From a mathematical viewpoint, MAS are often modelled as large-scale dynamical systems where each agent is represented by a subset of states which are updated via ``physical'' interaction rules \cite{helbing,siads19} (attraction, repulsion, alignment, synchronization etc.), or by means of a control/game framework \cite{lasry,huang}. 

In this work, we are concerned with the design of dynamic interactions and external control laws for nonlinear MAS representing the physical motion of a swarm of agents. Our mathematical modelling is inspired by animal collective dynamics, where large populations of birds and fish normally exhibit self-organization behaviour such as flocking, swarming, milling, and alignment. Other remarkable examples are linked to pedestrian behaviour and to platoons of unmanned  aerial vehicles (UAVs). In particular, we are interested in prescribing nonlinear dynamics for the MAS leading to self-organized flocking and trajectories with collision-avoidance features, the latter being a fundamental aspect for a realistic model. The emergence of collisionless flocking behaviour, understood as a configuration in which agents travel with the same constant velocity, is already a complex dynamic equilibrium of interest on its own right.  However, a flocking regime does not provide a complete account of the spatial configuration of the swarm, thus limiting its practical interest for applications, such as pedestrian modelling and UAV control. Hence, it is desirable to endow the MAS dynamics with additional forcing terms which can also induce the formation of a given spatial configuration. In this paper, we propose a dynamical MAS model  including collisionless flocking, together with a control law inducing a given spatial configuration. The control action we propose is inspired by the literature concerning string stability for autonomous vehicles, and consists of a decentralized feedback law requiring a reduced amount of communication between agents.

Let us briefly review the technical aspects of our work and the related literature. The starting point for our model is the seminal paper by Cucker and Smale \cite{cs07}, where the authors propose a nonlinear second-order model for multi-agent flocking dynamics. This work has been later extended along different directions, including collision-avoidance features \cite{CD,cdong2,cdong3,achl,bf,Choi2017}, forcing terms and control \cite{cuckerhuepe,HaHaKim,cfpt,BFK,bbck,hk18}, formation control \cite{pkh10,perea}, leadership \cite{dalmao} and mean-field modelling of large-scale swarms \cite{ccp17,ncmb,acfk,CFRT,HaLiu,cch}. Starting from a Cucker-Smale type model, we study collision avoidance in the framework of \cite{cch14,CCMP,mp18,kp}, where singular interaction kernels have been proposed. Such interaction kernels blow-up whenever two agents are located at the same position, providing an adequate framework for the study of collision-less control laws. While these works discuss the emergence of flocking and collision-avoidance features, we focus on the design of additional external signals enforcing a given spatial configuration for the swarm of agents. 

In order to induce spatial configurations over the swarm, we design a decentralized controller \cite{Bakule,Jadbabaie}, similar to those that have been studied in the context of 1D vehicle platoons \cite{Peters,surveystringstab}. The control strategy uses a low number of new interactions with respect to the original Cucker-Smale model, in order to achieve arbitrary spatial configurations. We extend these ideas to formation control in arbitrary dimensions and illustrate the 2D and 3D cases with numerical examples. Our main result is the characterization of the set of initial configurations leading to consensus with a pre-specified spatial pattern under a given decentralized control law, together with a certification of collision-avoidance along the trajectory. Up to our knowledge, this is the first work achieving such a design with Cucker-Smale type dynamics under the action of a decentralized external signal. In applications, decentralized control schemes have a clear economical benefit, but they are not necessarily sacrificing performance, when compared to more complex solutions \cite{balch}. Additionally, in higher dimensions, self organizing agents and their control take relevance if the extra states are interpreted as a feature of the agent that is prone to be measured and/or used for synchronization \cite{StrogatzOscil}. 

The control of MAS, and in particular drone and/or robot swarms is an intensive research topic (see the recent survey \cite{surveymaformationcontrol}), with applications ranging from spacecraft formation \cite{ccb, perea}, robot self-organization \cite{ferrante}, surveillance \cite{surveysurveillance} and localization \cite{surveylocal}, to digital media arts \cite{Kim2018}. Along this line, in Section \ref{numerics} we present different numerical simulations for planar and spatial formations, mimicking the display of UAV swarms forming the Olympic Rings symbol, as in the Intel project shown at the opening ceremony of the XXIII Olympic Winter Games, 2018, in PyeongChang, South Korea \cite{olympic}. Full simulation videos can be accessed at \href{https://youtu.be/C7UDGRudsyA}{https://youtu.be/C7UDGRudsyA}.

The rest of the paper is structured as follows. In Section \ref{ps} we present our main system, which is a Cucker-Smale model with singular interactions and a decentralized feedback control. In Section \ref{apriori} we introduce a total energy functional $E(x,v)$ for our system and show that the total energy functional is not increasing in time. Section \ref{regularity} presents a result concerning the collision-avoidance behaviour of the controlled system, and Section \ref{flocking} is devoted to provide a flocking estimate showing the velocity alignment between individuals as time goes to infinity and the relative positions are uniformly bounded in time. In Section \ref{formation} we show a result regarding the formation control, to conclude with different numerical experiments in Section \ref{numerics}.

\section{Problem description}\label{ps}
We study the dynamics of an $n$-particle system in which the position and velocity of the $i-$th agent, denoted $x_i(t) \in \R^d$ and $v_i(t) \in \R^d$ respectively, evolves according to
\begin{align}\label{main_eq}
\begin{aligned}
	\frac{d x_i(t)}{dt} &= v_i(t),\quad i=1,\dots, n, \quad t > 0,\cr
	\frac{d v_i(t)}{dt} &= \frac{K}{n}\sum_{j=1}^n \psi(r_{ij}(t))(v_j(t) - v_i(t)) + Mu_i(t),
\end{aligned}
\end{align}
subject to the initial data 
\bq\label{ini_main_eq}
(x_i(0), v_i(0)) =: (x_i^0, v_i^0) \quad \mbox{for} \quad i =1,\dots,n.
\eq
Here, $r_{ij}$ denotes the Euclidean distance between $i$ and $j$-th individuals at time $t$, i.e., $r_{ij}(t):= |x_j(t) - x_i(t)|$,  and $K$ and $M$ are nonnegative constants. The first term on the right hand side of the equation for $v_i$ in \eqref{main_eq} represents a nonlocal velocity alignment force where the discrepancies between the velocities of the agents are averaged and weighted by the communication function $\psi$ in such a way that closer individuals have stronger influence than further ones. 

Let us discuss the choice of the interaction kernel $\psi(r)$. In the original Cucker-Smale model \cite{cs07}, the regular weight function $\psi(r) = 1/(1+r^2)^{\beta/2}$ is considered without a control term, i.e., the system \eqref{main_eq} with $M=0$. Depending on the exponent $\beta$ which determines the short/long-range interaction regimes, conditional/unconditional flocking estimates were obtained in \cite{cs07}, and later refined in \cite{CFRT,HaLiu}. Inspired by the recent work \cite{CCMP}, here we consider a singular influence function $\psi : (0, +\infty) \to (0, +\infty)$ given by
\[
\psi(r) = \frac{1}{r^\alpha}, \quad \alpha > 0.
\]
In \cite{CCMP}, a critical value for the exponent $\alpha$ leading to global regularity of solutions or finite-time collision between individuals is obtained. More precisely, if $\alpha \geq 1$ and the individuals are placed in different positions initially, then there is no collision between them when governed by the system \eqref{main_eq} with $M=0$, and this immediately provides the global existence and uniqueness of solutions to the system \eqref{main_eq} in the absence of control. On the other hand, as shown in \cite{jpe}, for $\alpha\in(0,1)$ the particles may collide and even stick together in a finite time.

The second term on the right hand side of the equation for $v_i$ in \eqref{main_eq} is an external control signal $u := (u_1, \cdots, u_n)$. The design of external control actions acting over the free dynamics on a prescribed way constitutes a challenging topic on its own right. In this context, we focus on the design and analysis of a control signal able to induce a desired spatial configuration for the swarm. For this, we make the following choice:
$$\begin{aligned}
u_1 &= -\phi(|x_1 - x_2 - z_1|^2)(x_1 - x_2 - z_1),\cr
u_n &= \phi(|x_{n-1} - x_n - z_{n-1}|^2)(x_{n-1} - x_n - z_{n-1}),\cr
u_i &= \phi(|x_{i-1} - x_i - z_{i-1}|^2)(x_{i-1} - x_i - z_{i-1}) \\
& \quad-\phi(|x_i - x_{i+1} - z_i|^2)(x_i - x_{i+1} - z_i),
\end{aligned}$$
for $i \in \{2,\cdots, n-1\}$, where $z_i \in \R^d$, $i=1,\cdots, n-1$ determines a relative positioning of the agents based on the desired spatial configuration, and $\phi(r)$ is a regular communication weight of the form
\[
\phi(r) = \frac{1}{(1 + r)^\beta}, \quad \beta > 0.
\]

While the choice for the controller may seem arbitrary, it has two appealing features. First, the signal is a feedback control, as its computation is based solely on the current state of the system and predefined parameters. Second, the control signal is decentralized: its update for a single agent only requires information concerning the state of a reduced number of individuals (in our case only two predefined agents) rather than the knowledge of the full swarm state. Overall, the proposed design achieves a prescribed formation in a robust and economic way. In the following sections, we carry a thorough analysis and computational validation of the different  properties of the resulting controlled dynamics.

For notational simplicity, we set
\[
\|w\|^2 := \sum_{i=1}^n|w_i|^2 \quad \mbox{and} \quad w_c := \frac1n \sum_{i=1}^n w_i,
\]
for $w = (w_1,\cdots,w_n) \in \R^{d n}$.
%
%
%
%
\section{Preliminaries: A priori estimates}\label{apriori}
In this section, we present a priori estimates of the average quantities and total energy, which will be significantly used for the flocking estimate. 

We first set the total energy functional
\begin{align*}
E_1(v) + E_2(x) &:= \frac12 \|v\|^2 + \frac M2\sum_{i=2}^n\int_0^{|x_{i-1} - x_i - z_{i-1}|^2}\phi(r)\,dr,
\end{align*}
and its dissipation rate
\[
D(x,v) := \frac{K}{2n}\sum_{i,j=1}^n \psi(r_{ij})|v_i - v_j|^2. 
\]

\begin{lemma}\label{lem_energy}Let $T>0$ and $\{(x_i,v_i)\}_{i=1}^n$ be a smooth solution to the system \eqref{main_eq} in the time interval $[0,T]$. Then we have the explicit from of the averages:
\[
x_c(t) = x_c(0) + v_c(0)t, \qquad v_c(t) = v_c(0),
\]
and the non-increasing total energy estimate:
\[
\frac{d}{dt}E_1(v(t)) + \frac{d}{dt}E_2(x(t)) + D(x(t),v(t)) = 0,
\]
for $0 \leq t \leq T$.
\end{lemma}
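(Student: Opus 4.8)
The plan is to differentiate each quantity in time along a smooth solution of \eqref{main_eq} and exploit two structural features: the symmetry of the alignment kernel and the telescoping, ``gradient-of-a-potential'' structure of the control law. Throughout, abbreviate $w_k := \phi(|x_k - x_{k+1} - z_k|^2)(x_k - x_{k+1} - z_k)$ for $k=1,\dots,n-1$, so that the control reads $u_1 = -w_1$, $u_n = w_{n-1}$, and $u_i = w_{i-1} - w_i$ for $2 \le i \le n-1$.

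\emph{Step 1: the averages.} Summing the $v_i$-equation over $i$, the alignment term vanishes by antisymmetry: since $r_{ij}=r_{ji}$, relabelling $i \leftrightarrow j$ in $\sum_{i,j}\psi(r_{ij})(v_j - v_i)$ reverses its sign, so it is zero. The control term vanishes as well, because $\sum_{i=1}^n u_i = -w_1 + \sum_{i=2}^{n-1}(w_{i-1}-w_i) + w_{n-1} = 0$ telescopes. Hence $\frac{d}{dt}\sum_i v_i = 0$, i.e. $v_c(t) = v_c(0)$, and integrating $\dot x_c = v_c$ gives $x_c(t) = x_c(0) + v_c(0)t$.

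\emph{Step 2: kinetic energy.} Compute $\frac{d}{dt}E_1(v) = \sum_i v_i\cdot \dot v_i$. The alignment contribution $\frac{K}{n}\sum_{i,j}\psi(r_{ij})\, v_i\cdot(v_j-v_i)$, after the standard symmetrization $i\leftrightarrow j$ (using $v_i\cdot(v_j-v_i)+v_j\cdot(v_i-v_j) = -|v_i-v_j|^2$), equals $-\frac{K}{2n}\sum_{i,j}\psi(r_{ij})|v_i-v_j|^2 = -D(x,v)$. The control contribution is $M\sum_i v_i\cdot u_i$; regrouping the double sum by the index $k$ of $w_k$ — each $w_k$ occurs once in $u_k$ weighted by $-v_k$ and once in $u_{k+1}$ weighted by $v_{k+1}$ — yields $M\sum_{k=1}^{n-1}(v_{k+1}-v_k)\cdot w_k$.

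\emph{Step 3: potential energy and conclusion.} Reindexing, $E_2(x) = \frac{M}{2}\sum_{k=1}^{n-1}\int_0^{|x_k-x_{k+1}-z_k|^2}\phi(r)\,dr$; differentiating via the fundamental theorem of calculus and the chain rule, together with $\frac{d}{dt}|x_k-x_{k+1}-z_k|^2 = 2(x_k-x_{k+1}-z_k)\cdot(v_k-v_{k+1})$, gives $\frac{d}{dt}E_2(x) = M\sum_{k=1}^{n-1}(v_k-v_{k+1})\cdot w_k$, which is exactly the negative of the control contribution from Step 2. Adding the two derivatives therefore cancels all control terms and leaves $\frac{d}{dt}E_1(v) + \frac{d}{dt}E_2(x) + D(x,v) = 0$; since $D\ge 0$ and $\psi\ge 0$, this also shows $E_1+E_2$ is non-increasing. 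There is no genuine analytic obstacle here: smoothness of the solution on $[0,T]$ (which in particular forces $r_{ij}>0$, so $\psi(r_{ij})$ is finite) legitimizes every manipulation. The only steps needing care are the index bookkeeping in the telescoping/regrouping of the control terms and the sign-matching between the control work $M\sum_i v_i\cdot u_i$ and $\frac{d}{dt}E_2$ — it is precisely this cancellation that the potential $E_2$ is built to produce.
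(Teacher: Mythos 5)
Your proof is correct and takes essentially the same route as the paper's: the averages follow from the antisymmetry of the alignment term and the telescoping $\sum_i u_i = 0$, the kinetic energy is split into the symmetrized alignment part $-D$ and the control work, and the latter is identified (after the same index regrouping) with $-\frac{d}{dt}E_2$. The only cosmetic difference is that you differentiate $E_2$ directly and match it against the control work, whereas the paper rewrites the control work $I_2$ as a total time derivative; these are the same computation.
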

\begin{proof}
By the definition of the average quantities together with the fact that the symmetry of the communication weight function $\psi$ and $u_c = 0$, it is simple to get
\[
\frac{d x_c(t)}{dt} = v_c(t), \quad  \mbox{and} \quad \frac{d v_c(t)}{dt} = 0.
\]
We next estimate the kinetic energy. A straightforward computation yields
\begin{align}\label{est_v0}
\begin{aligned}
	\frac12 \frac{d}{dt}\|v\|^2 &= \sum_{i=1}^n \lt\lal v_i, \frac{d v_i}{dt}\rt\ral\cr
	&= \sum_{i=1}^n \lt\lal v_i, \frac{K}{n}\sum_{j=1}^n \psi(r_{ij})(v_j - v_i) + M u_i\rt\ral\cr
	&=: I_1 + M I_2,
\end{aligned}
\end{align}
where $\lal \cdot, \cdot\ral$ denotes the standard inner product in $\R^d$. Here, by substituting indices $i$ and $j$, and using the symmetry of the weight function $\psi$, $I_1$ can be easily estimated as 
\bq\label{est_i1}
I_1 = -\frac{K}{2n}\sum_{i,j=1}^n \psi(r_{ij})|v_i - v_j|^2 = - D(x,v).
\eq
For the estimate of $I_2$, we obtain
\begin{align}\label{est_i2}
\begin{aligned}
	I_2 &= \lal u_1, v_1\ral  + \sum_{i=2}^{n-1}\lal u_i, v_i \ral + \lal u_n, v_n \ral\cr
	&= \lal u_1, v_1 \ral + \sum_{i=2}^{n-1}\phi(|x_{i-1} - x_i - z_{i-1}|^2)\lal x_{i-1} - x_i - z_{i-1}, v_i\ral \cr
	&\quad - \sum_{i=2}^{n-1}\phi(|x_i - x_{i+1} - z_i|^2)\lal x_i - x_{i+1} - z_i, v_i \ral +\lal  u_n, v_n \ral\cr
	&=\lal u_1, v_1 \ral + \sum_{i=2}^{n-1}\phi(|x_{i-1} - x_i - z_{i-1}|^2)\lal x_{i-1} - x_i - z_{i-1}, v_i \ral \cr
	&\quad - \sum_{i=3}^{n}\phi(|x_{i-1} - x_i - z_{i-1}|^2)\lal x_{i-1} - x_i - z_{i-1}, v_{i-1}\ral + \lal u_n, v_n \ral \cr
	&=-\sum_{i=2}^n\phi(|x_{i-1} - x_i - z_{i-1}|^2)\lal x_{i-1} - x_i - z_{i-1}, v_{i-1} - v_i \ral\cr
	&= -\frac 12\frac{d}{dt}\sum_{i=2}^n\int_0^{|x_{i-1} - x_i - z_{i-1}|^2}\phi(r)\,dr.
\end{aligned}
\end{align}
Combining the estimates \eqref{est_v0}, \eqref{est_i1}, and \eqref{est_i2}, we conclude the desired result. 
\end{proof}
\begin{remark}\label{rmk_energy} Since the velocity average is conserved in time, i.e., $v_c'(t) = 0$, the time derivative of the kinetic energy can be rewritten as
\[
\frac{d}{dt}E_1(v(t)) = \frac{1}{4n}\frac{d}{dt}\sum_{i,j=1}^n|v_i(t) - v_j(t)|^2.
\]
This yields that we can rewrite the total energy estimate appeared in Lemma \ref{lem_energy} as
\[
\frac{d}{dt}\lt(\frac{1}{4n} \sum_{i,j=1}^n|v_i(t) - v_j(t)|^2 + E_2(x(t)) \rt) + D(x(t),v(t)) = 0.
\]
Then we obtain the following uniform-in-time estimate: 
\[
E(x(t),v(t)):=\frac{1}{4n} \sum_{i,j=1}^n|v_i(t) - v_j(t)|^2 + E_2(x(t)) \leq \frac{1}{4n} \sum_{i,j=1}^n|v_i^0 - v_j^0|^2 + E_2(x(0))=:E_0.
\]
In particular, we obtain
\[
\max_{1 \leq i,j \leq n}|v_i(t) - v_j(t)| \leq \sqrt{ \sum_{i,j=1}^n|v_i(t) - v_j(t)|^2} \leq 2\sqrt{nE_0}.
\]
\end{remark}
\begin{remark} If $v_c(0) = 0$, then $v_c(t) \equiv 0$ for all $t \geq 0$, and this subsequently gives
\[
E_1(t) = \frac{1}{4n} \sum_{i,j=1}^n|v_i(t) - v_j(t)|^2 \quad \mbox{for all} \quad t \geq 0.
\]
Without loss of generality, throughout this work, we may assume that $v_c(0) = 0$ and $x_c(0)=0$. If necessary, we may consider the translation frame $(x_i, v_i) \to (x_i - v_c t, v_i)$. This implies 
\begin{align}\label{zero}
	x_c(t)=0 \quad \mbox{and} \quad v_c(t) = 0, 
\end{align}
for all $t \geq 0$.

\end{remark}

%
%
%
%
\section{Non-collisional behavior: Global regularity}\label{regularity}
In this section, we study the non-collisional behavior of the system \eqref{main_eq} based on the work \cite{CCMP}, when $\alpha \geq 1$. This fact together with Cauchy-Lipschitz theory implies the global existence and uniqueness of smooth solutions to the system \eqref{main_eq}-\eqref{ini_main_eq}.

\begin{theorem}\label{thm_ext} Suppose that $\alpha \geq 1$ and the initial data $x_0$ satisfy 
\[
\min_{1 \leq i \neq j \leq n} r_{ij}(0) > 0.
\]
Then there exists a global smooth solution to the system \eqref{main_eq}-\eqref{ini_main_eq} satisfying 
\[
\min_{1 \leq i \neq j \leq n} r_{ij}(t) > 0 \quad \mbox{for} \quad t \geq 0.
\]
\end{theorem}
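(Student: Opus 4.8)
The plan is a standard continuation argument. Since $\psi(r)=r^{-\alpha}$ is smooth on $(0,\infty)$ and each control component $u_i$ is a smooth function of the positions (note $\phi(r)=(1+r)^{-\beta}$ is smooth on a neighbourhood of $[0,\infty)$), the right-hand side of \eqref{main_eq} is locally Lipschitz on the open set where $r_{ij}>0$ for all $i\neq j$; by hypothesis the initial configuration lies in this set. By Cauchy-Lipschitz there is a unique smooth solution on a maximal interval $[0,T^*)$, and if $T^*<\infty$ then, as $t\to T^*$, the solution must leave every compact subset of this open set: either $\|x(t)\|+\|v(t)\|\to\infty$, or $\min_{i\neq j}r_{ij}(t)\to 0$ along some sequence $t\to T^*$. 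It therefore suffices to exclude both possibilities.

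The first possibility is ruled out by the a priori estimates of Section \ref{apriori}. By the Remark following Lemma \ref{lem_energy}, $\max_{1\leq i,j\leq n}|v_i(t)-v_j(t)|\leq 2\sqrt{nE_0}$ for every $t\in[0,T^*)$; together with the conservation of $v_c$ this bounds $\|v(t)\|$ uniformly, whence $\|x(t)\|\leq\|x(0)\|+t\sup_{s}\|v(s)\|$ remains bounded on the finite interval $[0,T^*)$. We will also use a uniform bound on the control. Writing $\rho$ for any of the quantities $|x_{i-1}(t)-x_i(t)-z_{i-1}|$, its contribution to $u$ has magnitude $\phi(\rho^2)\rho=\rho(1+\rho^2)^{-\beta}$, which is bounded by an absolute constant when $\beta\geq\frac{1}{2}$; when $0<\beta<\frac{1}{2}$ we use instead that $E_2(x(t))\leq E_0$ together with $\int_0^\infty\phi(r)\,dr=+\infty$ (valid since $\beta<1$), which forces each $\rho$ to be bounded uniformly in $t$. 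In all cases $\|u(t)\|\leq\bar u$ on $[0,T^*)$ for some constant $\bar u$ (and if $M=0$ the control is absent, so this is vacuous).

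The second, and main, difficulty is to rule out a collision at $T^*$; here the idea is to follow the argument for singular kernels from \cite{CCMP}. The mechanism is of second order: for a pair $(i,j)$, writing $r=r_{ij}$ and $e=(x_i-x_j)/r$, one computes
\[
\ddot r=\frac{|v_i-v_j|^2-\dot r^2}{r}-\frac{2K}{n}\psi(r)\dot r+\langle e,R_{ij}\rangle,
\]
where the first term is nonnegative and $R_{ij}$ gathers the remaining alignment interactions together with the control difference $M(u_i-u_j)$. Setting $\Phi(r):=\frac{2K}{n}\int_1^r\psi(s)\,ds$, which tends to $-\infty$ as $r\to0^+$ precisely because $\alpha\geq1$ makes $\psi$ non-integrable at the origin, one gets $\frac{d}{dt}\bigl(\dot r+\Phi(r)\bigr)\geq-|R_{ij}|$. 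If $r_{ij}(t)\to0$ as $t\to T^*$, then $\dot r+\Phi(r)$ would diverge to $-\infty$, which is incompatible with $|R_{ij}|$ being integrable on $[0,T^*)$; the latter is obtained through the pairwise bookkeeping of \cite{CCMP} (restricting attention to a minimizing pair, so that $\psi(r_{ik}),\psi(r_{jk})\leq\psi(r_{ij})$, and controlling the resulting terms). The only new contribution compared with \cite{CCMP} is $M(u_i-u_j)$, which adds at most $2M\bar u$ to $|R_{ij}|$ — a bounded perturbation that integrates to a finite amount over the finite interval $[0,T^*)$ and hence does not affect the argument.

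Since neither scenario can occur, $T^*=\infty$: the solution is global and smooth, and the collision-avoidance estimate yields $\min_{i\neq j}r_{ij}(t)>0$ for all $t\geq0$, as claimed. The step I expect to be most delicate is the collision-avoidance bookkeeping — estimating the contribution of all the auxiliary pairwise distances to $R_{ij}$ simultaneously so that it is integrable up to $T^*$ — together with the (routine but necessary) check that the decentralized feedback control enters only as a uniformly bounded, hence harmless, forcing term.
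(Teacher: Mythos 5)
Your overall architecture (local existence on the open collisionless set, continuation, a priori bounds on positions/velocities from the energy estimate, boundedness of the decentralized control, and exclusion of collisions via the non-integrability of $\psi$ at the origin for $\alpha\geq 1$) matches the paper's strategy, and your treatment of $u$ as a uniformly bounded perturbation is exactly the role it plays in the paper's estimate of the control term. However, the core collision-exclusion step, which you run as a \emph{pairwise} second-order argument on $\ddot r_{ij}$, has a genuine gap in the case of a simultaneous multi-particle collision, and the appeal to ``the pairwise bookkeeping of \cite{CCMP}'' does not close it: that reference (and the paper) in fact performs \emph{cluster} bookkeeping, not pairwise bookkeeping. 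Concretely, if three or more particles collide at $T^*$, then for your distinguished pair $(i,j)$ the residual term $R_{ij}$ contains $\frac{K}{n}\psi(r_{ik})(v_k-v_i)$ with $r_{ik}\to 0$, a quantity as singular as $\psi(r_{ij})$ itself and with no favourable sign. Your minimizing-pair device only yields $|R_{ij}|\lesssim \psi(r_{ij})$, and $\int^{T^*}\psi(r_{ij})\,dt<\infty$ is essentially the statement you are trying to prove, so the inequality $\frac{d}{dt}\bigl(\dot r+\Phi(r)\bigr)\geq -|R_{ij}|$ cannot be integrated. Moreover the minimizing pair switches in time, so $r_{\min}$ is only Lipschitz and the second-order quantity $\dot r_{\min}+\Phi(r_{\min})$ is not well defined along the evolution.

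The paper avoids both problems by working with the colliding cluster $[l]$ and the aggregated quantities $\|x\|_{[l]}$, $\|v\|_{[l]}$: after symmetrization, \emph{all} intra-cluster singular interactions combine into the single dissipative term $-2C_1\psi(\|x\|_{[l]})\|v\|_{[l]}^2$ (the estimate of $I_1$), while the interactions with particles outside the cluster are Lipschitz (distance $\geq\delta$) and the control is bounded, giving the closable first-order system $\bigl|\frac{d}{dt}\|x\|_{[l]}\bigr|\leq\|v\|_{[l]}$, $\frac{d}{dt}\|v\|_{[l]}\leq -C_1\psi(\|x\|_{[l]})\|v\|_{[l]}+C_5$, from which Gronwall and the bound on $J=\int\psi(\|x\|_{[l]})\|v\|_{[l]}$ force $\Psi(\|x\|_{[l]})$ to stay bounded, contradicting $\|x\|_{[l]}\to 0$. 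Your argument is correct as written only when the collision cluster has exactly two elements; to repair it in general you would need to replace the pairwise quantity $r_{ij}$ by the cluster quantity (or otherwise show that the intra-cluster cross terms are dissipative in aggregate), which is precisely the paper's route.
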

\begin{remark} In \cite{CD}, the repulsive forcing term is added to the original Cucker-Smale model to avoid collisions between particles. In contrast, we extract a repulsive forcing effect by taking into account the singular weights in the velocity alignment force.
\end{remark}
\begin{remark}From the perspective of applications, the existence of a global-in-time minimal distance between the agents is often desirable. Theorem \ref{thm_ext} does not ensure its existence, since the minimum inter particle distance $\min_{1 \leq i \neq j \leq n} r_{ij}(t)$ can be equal to zero when $T = +\infty$. To circumvent this issue one may consider the CS model with an expanded range of singularity $\psi_\delta(s) = \psi(s-\delta)$ for $s>\delta$. It was shown in \cite{CCMP} that the Cucker-Smale model with weight $\psi_\delta$ and singularity $\alpha\geq 2$ admits a global-in-time minimal distance between the particles, $\min_{1 \leq i \neq j \leq n} r_{ij}(t)\geq \delta$. System \eqref{main_eq} with weight $\psi_\delta$ is expected to exhibit similar behaviour. Evidently, we should set $|z_i|>\delta$ for all $i$ in the control signal $u$, whenever $\delta>0$.
\end{remark}
\begin{proof}[Proof of Theorem \ref{thm_ext}] We first fix a time $T>0$, and we will show that there is no collision between particles until that time. Since there are no particles colliding at the initial time, there exists a $t_0 \in (0,T]$ such that the smooth solution uniquely exists until that time. Let us assume that $t_0 < T$, i.e., $t_0$ is the first time of collision of any particles. We then set $[l]$ to be the set of all indices $j \in \{1,\cdots,n\}$ where the $j$-th particle collides with the $l$-th particle, i.e., $r_{jl}(t) \to 0$ as $t \to t_0$ for all $j \in [l]$ and $r_{jl}(t) \geq \delta > 0$ in $[0,t_0)$ for all $j \notin [l]$ and some positive constant $\delta > 0$. Due to our assumption, $|[l]| > 1$. We set
\[
\|x\|_{[l]}(t) := \sqrt{\sum_{i,j \in [l]}|x_i(t) - x_j(t)|^2}, \quad \|v\|_{[l]}(t) := \sqrt{\sum_{i,j \in [l]}|v_i(t) - v_j(t)|^2}, 
\]
and
\[ 
\|u\|_{[l]}(t) := \sqrt{\sum_{i,j \in [l]}|u_i(t) - u_j(t)|^2}.
\]
Note that by definition $\|x\|_{[l]}(t) \to 0$ as $t \to t_0-$. Our goal is to show that this cannot happen, thus $t_0 \geq T$. We now analyze the evolution of $\|x\|_{[l]}(t)$ and $\|v\|_{[l]}(t)$. We can first easily estimate
\bq\label{est_x}
\frac{d}{dt}\|x\|_{[l]}^2 \leq 2\|x\|_{[l]}\|v\|_{[l]}, \quad \mbox{i.e.,} \quad \lt|\frac{d}{dt}\|x\|_{[l]} \rt| \leq \|v\|_{[l]}.
\eq
We now estimate $\|v\|_{[l]}(t)$ as 
$$\begin{aligned}
\frac{d}{dt}\|v\|_{[l]}^2 &= 2\sum_{i,j\in[l]}\lt\lal v_i-v_j, \left[\frac{K}{n} \sum_{k=1}^n\psi(r_{ki})(v_k-v_i)- \frac{K}{n}\sum_{k=1}^n\psi(r_{kj})(v_k-v_j)\right]\rt\ral\\
&\quad + 2M\sum_{i,j\in[l]}\lal v_i-v_j, u_i - u_j\ral\cr
&=\frac{2K}{n}\left(\sum_{i,j,k\in[l]}+\sum_{\stackrel{i,j\in[l]}{k\notin[l]}}\right) \left[\psi(r_{ik})\lal v_i-v_j, v_k-v_i\ral-\psi(r_{jk})\lal v_i-v_j, v_k-v_j\ral\right]\cr
&\quad + 2M\sum_{i,j\in[l]}\lal v_i-v_j, u_i - u_j\ral\cr
&=: I_1 + I_2 + I_3.
\end{aligned}$$
$\diamond$ Estimate of $I_1$: By exchanging the indices $i$ and $k$, we find
$$\begin{aligned}
&\frac{2K}{n}\sum_{i,j,k\in[l]}\psi(r_{ki})\lal v_i-v_j, v_k-v_i\ral \\
&\quad =\frac{K}{n}\sum_{i,j,k\in[l]}\psi(r_{ki})\lal v_i-v_j , v_k-v_i\ral +\frac{K}{n}\sum_{i,j,k\in[l]}\psi(r_{ki})\lal v_k-v_j, v_i-v_k\ral\\
&\quad = \frac{K}{n}\sum_{i,j,k\in[l]}\psi(r_{ki})\lal v_i-v_k , v_k-v_i\ral\\
&\quad =-\frac{K|[l]|}{n}\sum_{i,j\in[l]}\psi(|x_i-x_j|)|v_i-v_j|^2.
\end{aligned}$$
In a similar fashion as the above, we get
\[
- \frac{2}{n}\sum_{i,j,k\in[l]}^n\psi(r_{kj})\lal v_i-v_j , v_k-v_j\ral = -\frac{|[l]|}{n}\sum_{i,j\in[l]}\psi(r_{ij})|v_i-v_j|^2.
\]
This together with the fact that $|x_i-x_j|\leq  \|x\|_{[l]}$ for all $i,j\in[l]$ yields
\begin{align*}
I_1 \leq -\frac{2K|[l]|}{n}\sum_{i,j\in[l]}\psi(r_{ij})|v_i-v_j|^2 \leq -2C_1\psi(\|x\|_{[l]})\|v\|_{[l]}^2,
\end{align*}
where $C_1 :=K |[l]|/n$. \newline

\noindent $\diamond$ Estimate of $I_2$: A straightforward computation gives
\begin{align*}
I_2 &= \frac{2K}{n}\sum_{\stackrel{i,j\in[l]}{k\notin[l]}}\psi(r_{ik})\lal v_i-v_j, v_j-v_i\ral + \frac{2K}{n}\sum_{\stackrel{i,j\in[l]}{k\notin[l]}}\left(\psi(r_{ik})-\psi(r_{jk})\right)\lal v_i-v_j, v_k-v_j\ral \\
&= -\frac{2K}{n}\sum_{\stackrel{i,j\in[l]}{k\notin[l]}} \psi(r_{ik})|v_i-v_j|^2 + \frac{2K}{n}\sum_{\stackrel{i,j\in[l]}{k\notin[l]}}\left(\psi(r_{ik})-\psi(r_{jk})\right)\lal v_i-v_j , v_k-v_j\ral \\
&\leq \frac{2KL_\delta}{n}\sum_{\stackrel{i,j\in[l]}{k\notin[l]}}|\lal v_i-v_j, v_k-v_j\ral||x_i-x_j|,
\end{align*}
where $L_\delta$ is the Lipschitz constant of $\psi$ in the interval $(\delta,\infty)$. On the other hand, it follows from Remark \ref{rmk_energy} that
\[
\max_{1 \leq k,j \leq n}|v_k - v_j| \leq 2\sqrt{nE_0}.
\]
Thus we can estimate $I_2$ as
$$\begin{aligned}
I_2 &\leq \frac{4K\sqrt{E_0}L_\delta}{\sqrt{n}}\sum_{\stackrel{i,j\in[l]}{k\notin[l]}}|v_i - v_j||x_i - x_j| \cr
&= \frac{4K\sqrt{E_0}L_\delta(n - |[l]|)}{\sqrt{n}}\sum_{i,j \in [l]}|v_i - v_j||x_i - x_j| \cr
&\leq 2C_2 \|v\|_{[l]}\|x\|_{[l]},
\end{aligned}$$
where $C_2 > 0$ is given by
\[
C_2 := \frac{2K\sqrt{E_0}L_\delta(n - |[l]|)}{\sqrt{n}}.
\]

\noindent $\diamond$ Estimate of $I_3$: We first notice that for any $i \in \{1,\dots,n\}$
$$\begin{aligned}
|u_i| &\leq \sum_{k=2}^n|x_{k-1} - x_k| + \sum_{k=2}^n |z_{k-1}|\cr
&\leq \sqrt{n}\sqrt{\sum_{k=2}^n|x_{k-1} - x_k|^2} + \sum_{k=2}^n |z_{k-1}|\cr
&\leq \sqrt{n}\Gamma(x) + |z|_{\ell^1},
\end{aligned}$$
where
\[
\Gamma(x) := \sqrt{\sum_{i,j=1}^n|x_i - x_j|^2} \quad \mbox{and} \quad |z|_{\ell^1}:= \sum_{k=2}^n |z_{k-1}|.
\]
On the other hand, we get
\[
\frac{d}{dt}\Gamma(x(t))^2 \leq 2\Gamma(x(t)) \Lambda(v(t)) \quad \mbox{and} \quad \frac{d}{dt}\Gamma(x(t)) \leq  \Lambda(v(t)),
\]
where
\[
\Lambda(v(t)) := \sqrt{\sum_{i,j=1}^n|v_i(t) - v_j(t)|^2}.
\]
Since $\Lambda(v(t)) \leq 2\sqrt{nE_0}$ for all $t \geq 0$, this yields
\[
\Gamma(x(t)) \leq \Gamma(x(0)) +2\sqrt{nE_0}T.
\]
Thus we find
\[
|u_i(t)| \leq \sqrt{n}\Gamma(x(t)) + |z|_{\ell^1} \leq \sqrt{n}\lt(\Gamma(x(0)) + 2\sqrt{nE_0}T \rt) + |z|_{\ell^1} =: C_3>0,
\]
and furthermore we obtain
\[
\|u\|_{[l]} \leq \sqrt{2\sum_{i,j \in [l]}(|u_i|^2 + |u_j|^2) } \leq 2C_3|[l]|.
\]
Hence we have
\[
I_3 \leq 2M\|v\|_{[l]}\|u\|_{[l]} \leq 4C_3|[l]|M\|v\|_{[l]} =: 2C_4\|v\|_{[l]}.
\]
Combining all of the above estimates, we find
\[
\frac{d}{dt}\|v\|_{[l]}^2\leq -2C_1\psi\lt(\|x\|_{[l]}\rt)\|v\|_{[l]}^2 + 2C_2\|v\|_{[l]}\|x\|_{[l]} + 2C_4\|v\|_{[l]}.
\]
Since 
\[
\|x\|_{[l]}(t) \leq \Gamma(x(t)) \leq \Gamma(x(0)) + 2\sqrt{nE_0}T,
\]
if we set
\[
C_5 := C_2\lt(\Gamma(x(0)) + 2\sqrt{nE_0}T\rt) + C_4,
\]
then we get
\[
\frac{d}{dt}\|v\|_{[l]}^2\leq -2C_1\psi\lt(\|x\|_{[l]}\rt)\|v\|_{[l]}^2 + 2C_5\|v\|_{[l]},
\]
i.e.,
\[
\frac{d}{dt}\|v\|_{[l]} \leq -C_1\psi\lt(\|x\|_{[l]}\rt)\|v\|_{[l]} + C_5, \quad a.e. \mbox{ on } (s,t_0)
\]
with $0 \leq s < t_0$. We now apply Gronwall's inequality on the time interval $(s,t_0)$ to obtain
\begin{align}\label{est_v}
\begin{aligned}
	\|v\|_{[l]}(t) &\leq\|v\|_{[l]}(s)e^{-C_1\int_s^t \psi(\|x\|_{[l]}(\tau))\,d\tau} + C_5\int_s^t e^{-C_1\int_\tau^t \psi(\|x\|_{[l]}(\sigma))\,d\sigma} d\tau\cr
	&\leq C_5e^{-C_1\int_s^t \psi(\|x\|_{[l]}(\tau))\,d\tau} + C_5\int_s^t e^{-C_1\int_\tau^t \psi(\|x\|_{[l]}(\sigma))\,d\sigma} d\tau,
\end{aligned}
\end{align}
due to $\|v\|_{[l]}(t) \leq \Lambda(v(t)) \leq C_3 < C_4 < C_5$. Let us denote by $\Psi$ the primitive of $\psi$. Then we find from \eqref{est_x} that 
\begin{align*}
\left|\Psi(\|x\|_{[l]}(t))\right| &= \left|\int_s^t\frac{d}{dt}\Psi(\|x\|_{[l]}(\tau))\,d\tau
+\Psi(\|x\|_{[l]}(s))\right|\\
&=\left|\int_s^t\psi(\|x\|_{[l]}(\tau))\left(\frac{d}{dt}\|x\|_{[l]}\right)(\tau)\,d\tau
+ \Psi(\|x\|_{[l]}(s))\right|\\
&\leq \int_s^t\psi(\|x\|_{[l]}(\tau))\|v\|_{[l]}(\tau)\,d\tau +|\Psi(\|x\|_{[l]}(s))|.
\end{align*}
We now estimate
\[
J(t,s) := \int_s^t\psi(\|x\|_{[l]}(\tau))\|v\|_{[l]}(\tau)\,d\tau.
\]
Note that if $J$ is bounded from above by some constant $J_* > 0$, then we have
\begin{align*}
\left|\Psi(\|x\|_{[l]}(t))\right| \leq J_* +\Psi(\|x\|_{[l]}(s))|.
\end{align*}
This leads to a contradiction since the right hand side of the above inequality is bounded, however $\left|\Psi(\|x\|_{[l]}(t))\right| \to +\infty$ as $s<t \to t_0-$, and this gives $|[l]| = 0$, i.e., there is no collision between particles until time $T>0$. Thus in the rest of proof, we show the boundedness of $J$.  For notational simplicity, we set
\[
b(t,s) := \exp\lt(-C_1\int_s^t \psi(\|x\|_{[l]}(\tau))\,d\tau\rt).
\]
Then it follows from \eqref{est_v} that
\[
J(t,s) \leq \int_s^t\psi(\|x\|_{[l]}(\tau))\lt(b(\tau,s) + \int_s^\tau b(\tau,\sigma)\,d\sigma \rt)\,d\tau =: J_1(t,s) + J_2(t,s).
\]
Note that $b(t,s)$ has the following properties:
\[
\pa_t b(t,s) = -C_1\psi(\|x\|_{[l]}(t))b(t,s) \quad \mbox{and} \quad b(t,\tau)b(\tau,s) = b(t,s) \quad\mbox{for} \quad s \leq \tau \leq t.
\]
By using these properties, we estimate $J_i(t,s),i=1,2$ as
$$\begin{aligned}
J_1(t,s) &= -\frac{C_5}{C_1}\int_s^t \pa_\tau b(\tau,s)\,d\tau = -\frac{C_5}{C_1}\lt(b(t,s) - b(s,s) \rt) = \frac{C_5}{C_1}\lt(1 - b(t,s) \rt) \leq \frac{C_5}{C_1},\cr
J_2(t,s) &= C_5\int_s^t \psi(\|x\|_{[l]}(\tau))\int_s^\tau \frac{b(\tau,s)}{b(\sigma,s)}\,d\sigma d\tau \cr
&= C_5\int_s^t \psi(\|x\|_{[l]}(\tau))b(\tau,s)\lt(\int_s^\tau \frac{1}{b(\sigma,s)}\,d\sigma \rt)d\tau\cr
&= - \frac{C_5}{C_1}\int_s^t \pa_\tau b(\tau,s)\lt(\int_s^\tau \frac{1}{b(\sigma,s)}\,d\sigma \rt)d\tau\cr
&=-\frac{C_5}{C_1}b(t,s)\int_s^t \frac{1}{b(s,\sigma)}\,d\sigma + \frac{C_5}{C_1}\int_s^t \frac{b(\tau,s)}{b(\tau,s)}\,d\tau\cr
&\leq \frac{C_5}{C_1}(t-s).
\end{aligned}$$
Hence we have
\[
J(t,s) \leq \frac{C_5}{C_1}(1 + T),
\]
and this concludes the proof. 
\end{proof}

\begin{remark}
It is worthwhile to note that the only property of $u$ that is required for the proof of Theorem \ref{thm_ext} is its boundedness in the phase space. Indeed, the main idea behind the proof is to divide the particles into two groups: a group $A$ of particles colliding at $t_0$ and a group $B$ of particles that do not collide with particles from $A$. Then the interaction within group $A$ is singular and it outweighs any interaction within $B$ and any interaction between $A$ and $B$. Similarly, any effect of additional bounded forces is negligible compared to the singular interaction within $A$. Then, the interaction within $A$, after it outweighs all other influences, is used to prove the lack of collisions.
\end{remark}

%
%
%
%
\section{Flocking behavior}\label{flocking}
In this section, we provide a rigorous flocking estimate for the system \eqref{main_eq}. The proof follows a similar idea to the one used for a regular communication weight $\psi$. In the regular case, we conclude that $\|v\|^2\to 0$ from the fact that $\|v\|^2$ is integrable on $[0,\infty)$. However we may do it only since we know that $\|v\|^2$ is sufficiently regular, which is ensured by the regularity of $\psi$. In the case of singular $\psi$ we need to put some additional effort into proving sufficient, uniform-in-time, regularity of $\|v\|^2$. We do it by showing that the derivative of $\|v\|^2$ is a sum of an integrable function and a bounded function, which implies that $\|v\|^2$ is a sum of an absolutely continuous function and of a Lipschitz continuous function. Therefore, $\|v\|^2$ is a uniformly continuous function and its integrability ensures that $\|v\|^2\to 0$ as $t\to\infty$.

\begin{theorem}\label{thm_1} Suppose that $E_0 < \infty$, $\alpha \geq 1$, and the initial data $x_0$ satisfy
\[
\min_{1 \leq i \neq j \leq n}r_{ij}(0) > 0.
\]
Then there exists a unique smooth solution to the system \eqref{main_eq}-\eqref{ini_main_eq}. Furthermore, we assume that one of the two following hypotheses holds:
\begin{itemize}
	\item[(i)] $\beta \leq 1$; 
	\item[(ii)] $\beta > 1$ and 
	\bq\label{apt}
	\sum_{i=2}^{n}\int_{|x_{i-1}^0 - x_{i}^0 - z_{i-1}|^2}^\infty \phi(r)\,dr > \frac{1}{2Mn}\sum_{i,j=1}^n|v_i^0 - v_j^0|^2.
	\eq
\end{itemize}
Then we have
\[
\sup_{0 \leq t \leq \infty}\max_{1 \leq i,j \leq n}r_{ij}(t) < \infty \quad \mbox{and} \quad \max_{1 \leq i,j \leq n}|v_i(t) - v_j(t)|\to 0 \quad \mbox{as} \quad t \to \infty.
\]
\end{theorem}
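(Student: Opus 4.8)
The plan is to read off global existence from Theorem~\ref{thm_ext} and then prove the two asymptotic assertions separately, using the monotone energy identity of Lemma~\ref{lem_energy} and the uniform bound $E(x(t),v(t))\le E_0$ of Remark~\ref{rmk_energy}. Since $\alpha\ge 1$ and the initial positions are distinct, Theorem~\ref{thm_ext} already provides the global smooth (and, by Cauchy--Lipschitz away from collisions, unique) solution with $\min_{i\neq j}r_{ij}(t)>0$ for all $t$, so every quantity below is well defined; by the normalization \eqref{zero} I take $v_c\equiv 0$, whence $\sum_{i,j}|v_i-v_j|^2 = 2n\|v\|^2$.

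\emph{Step 1 (uniform bound on relative positions).} From Remark~\ref{rmk_energy}, $E_2(x(t))\le E_0$, so for every $i\in\{2,\dots,n\}$,
\[
\int_0^{|x_{i-1}(t)-x_i(t)-z_{i-1}|^2}\phi(r)\,dr\ \le\ \frac{2}{M}E_2(x(t))\ \le\ \frac{2E_0}{M}.
\]
If $\beta\le 1$ then $\int_0^s\phi(r)\,dr\to\infty$ as $s\to\infty$, so this bounds $|x_{i-1}(t)-x_i(t)-z_{i-1}|$, hence $|x_{i-1}(t)-x_i(t)|$, uniformly in $t$; summing the consecutive bounds along the chain with the triangle inequality (and $\sum_i|z_i|<\infty$) then bounds $\max_{i,j}r_{ij}(t)$. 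If $\beta>1$ the wells have finite depth $C_\phi:=\int_0^\infty\phi(r)\,dr=1/(\beta-1)$; writing $\Phi_i(t):=\int_{|x_{i-1}(t)-x_i(t)-z_{i-1}|^2}^\infty\phi(r)\,dr$, a short computation re-expresses $E_2$ through $\sum_i\Phi_i$ and shows that \eqref{apt} is equivalent to the strict inequality $E_0<\tfrac M2(n-1)C_\phi$. I would then run a continuation argument: on the maximal interval on which all $|x_{i-1}-x_i-z_{i-1}|$ stay below a suitably chosen threshold, the energy bound controls the potential and the strict gap forbids the threshold from being attained, so that interval is $[0,\infty)$ and $\max_{i,j}r_{ij}(t)\le R$ for some $R<\infty$.

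\emph{Step 2 (velocity alignment).} Integrating the identity of Lemma~\ref{lem_energy} and using $E_1,E_2\ge 0$ gives $\int_0^\infty D(x(s),v(s))\,ds\le E_0<\infty$. By Step 1, $\psi(r_{ij}(t))=r_{ij}(t)^{-\alpha}\ge R^{-\alpha}$, hence
\[
D(x,v)=\frac{K}{2n}\sum_{i,j=1}^n\psi(r_{ij})|v_i-v_j|^2\ \ge\ \frac{KR^{-\alpha}}{2n}\sum_{i,j=1}^n|v_i-v_j|^2\ =\ KR^{-\alpha}\|v\|^2,
\]
so $\int_0^\infty\|v(s)\|^2\,ds<\infty$. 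To pass from integrability to decay I use that, by \eqref{est_v0}--\eqref{est_i1},
\[
\frac{d}{dt}\|v(t)\|^2\ =\ -2D(x(t),v(t))\ +\ 2M\sum_{i=1}^n\langle v_i(t),u_i(t)\rangle .
\]
The first term lies in $L^1(0,\infty)$, so its primitive is uniformly continuous; for the second, the bound $|u_i|\le\sqrt n\,\Gamma(x)+|z|_{\ell^1}$ from the proof of Theorem~\ref{thm_ext} together with Step 1 makes $\Gamma(x(t))$ bounded, and $|v_i(t)|\le 2\sqrt{nE_0}$ by Remark~\ref{rmk_energy}, so that integrand is bounded and its primitive is Lipschitz. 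Therefore $t\mapsto\|v(t)\|^2$ is a sum of an absolutely continuous and a Lipschitz function, hence uniformly continuous on $[0,\infty)$; a nonnegative, uniformly continuous, integrable function on $[0,\infty)$ tends to $0$, so $\|v(t)\|^2\to 0$, and finally $\max_{i,j}|v_i(t)-v_j(t)|^2\le\sum_{i,j}|v_i(t)-v_j(t)|^2=2n\|v(t)\|^2\to 0$.

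\emph{Main obstacle.} The work concentrates in two places. First, the position bound when $\beta>1$: the finite-depth wells make the energy bound non-confining, so one must genuinely use the strictness in \eqref{apt} and make the continuation argument close with an appropriate threshold. Second, the implication $\|v\|^2\in L^1\Rightarrow\|v\|^2\to 0$: because $\psi$ is singular, $D$ (and with it $\frac{d}{dt}\|v\|^2$) need not be bounded — $r_{ij}(t)$ may approach $0$ in infinite time even absent collisions — which is exactly why $\frac{d}{dt}\|v\|^2$ must be split into an $L^1$ part plus a bounded part rather than bounded outright as in the regular-kernel case. Everything else in Step~2 is routine once Step~1 is in hand.
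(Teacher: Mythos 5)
Your proposal follows the paper's proof essentially step by step: global existence and collision avoidance from Theorem~\ref{thm_ext}; the uniform bound on relative positions from the monotone energy of Lemma~\ref{lem_energy} and Remark~\ref{rmk_energy}; the resulting lower bound $\psi(r_{ij})\ge R^{-\alpha}$ turning $\int_0^\infty D\,dt\le E_0$ into $\|v\|^2\in L^1(0,\infty)$; and the splitting of $\tfrac{d}{dt}\|v\|^2$ into an integrable part $-2D$ plus a bounded control contribution, so that $\|v\|^2$ is uniformly continuous and integrable, hence tends to zero. Step~2 and case (i) of Step~1 are complete and match the paper (your per-summand bound $\int_0^{|x_{i-1}-x_i-z_{i-1}|^2}\phi\le 2E_0/M$ is in fact a slightly cleaner route than the paper's for $\beta\le1$, and your reformulation of \eqref{apt} as $E_0<\tfrac M2(n-1)C_\phi$ is correct). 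The one place you stop short is case (ii), where you announce a ``continuation argument with a suitably chosen threshold'' without producing the threshold or showing it cannot be attained. The paper needs no continuation there: it defines $d_M$ explicitly by \eqref{new1}, namely $\sum_{i=2}^n\int_{|x_{i-1}^0-x_i^0-z_{i-1}|^2}^{d_M^2}\phi(r)\,dr=\tfrac{1}{2Mn}\sum_{i,j}|v_i^0-v_j^0|^2$, a finite $d_M$ existing precisely under (i) or under the strict inequality \eqref{apt}; subtracting the energy inequality \eqref{est_000} then gives $\sum_{i=2}^n\int_{|x_{i-1}(t)-x_i(t)-z_{i-1}|^2}^{d_M^2}\phi(r)\,dr\ge0$ for all $t$, from which the paper reads off $|x_{i-1}(t)-x_i(t)-z_{i-1}|\le d_M$ and hence $\max_{i,j}r_{ij}\le C_0$. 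You should replace your sketch by this explicit construction: since the potential wells have finite depth when $\beta>1$, the bare bound $E_2(x(t))\le E_0$ is not confining, and it is exactly the choice of $d_M$ in \eqref{new1} that converts the strict gap in \eqref{apt} into a quantitative bound on the individual gaps. Everything downstream of that bound in your write-up is fine.
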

\begin{remark}For the flocking estimate, we only need conditions for $\phi$ like boundedness, positivity, and the above assumption \eqref{apt}, which is automatically satisfied if $\phi$ integrates to infinity on any interval $[c,\infty)$.
\end{remark}
\begin{proof}[Proof of Theorem \ref{thm_1}]
{\bf Uniform boundedenss of $r_{ij}$:} It follows from Theorem \ref{thm_ext} that
 \[
\min_{1 \leq i \neq j \leq n}r_{ij}(t) > 0 \quad \mbox{for} \quad t \geq 0 
\]
and from Lemma \ref{lem_energy} that 
\[
E_2(x(t)) \leq E_0, 
\]
i.e.,
\bq\label{est_000}
\sum_{i=2}^n\int_{|x_{i-1}^0 - x_i^0 - z_{i-1}|^2}^{|x_{i-1}(t) - x_i(t) - z_{i-1}|^2}\phi(r)\,dr \leq \frac{1}{2Mn}\sum_{i,j=1}^n|v_i^0 - v_j^0|^2\quad \mbox{for} \quad t \geq 0.
\eq
On the other hand, under our main assumptions, we can find some constant $d_M > |x_{i-1}^0 - x_i^0 - z_{i-1}|$ such that
\begin{equation}\label{new1}
\sum_{i=2}^n\int_{|x_{i-1}^0 - x_i^0 - z_{i-1}|^2}^{d_M^2}\phi(r)\,dr = \frac{1}{2Mn}\sum_{i,j=1}^n|v_i^0 - v_j^0|^2.
\end{equation}
This, together with \eqref{est_000} yields
\[
\sum_{i=2}^n\int_{|x_{i-1}(t) - x_i(t) - z_{i-1}|^2}^{d_M^2}\phi(r)\,dr \geq 0,
\]
thus, we get
\bq\label{est_00}
|x_{i-1}(t) - x_i(t) - z_{i-1}| \leq d_M \quad \mbox{for} \quad i=2,\dots,n.
\eq
This further implies
\[
|x_i - x_j| \leq \sum_{k=i}^{j-1}|z_k| + (j-i)d_M,
\]
for any $1 \leq i\leq j \leq n$. Hence, for $t \geq 0$, we have
\[
\max_{1 \leq i \neq j \leq n}r_{ij}(t) \leq n\lt(\max_{1 \leq k \leq n-1}|z_k| + d_M\rt) = :C_0.
\]
Subsequently, we obtain
\bq\label{lower_bdd}
\psi_m := \min_{s \in [0,C_0]}\psi(s) = \frac{1}{C_0^\alpha} > 0.
\eq
{\bf Time-asymptotic velocity alignment behavior:} It follows from Lemma \ref{lem_energy} and Remark \ref{rmk_energy} that
\begin{align*}
\frac{d}{dt} E(x(t),v(t)) = -D(x(t),v(t)),
\end{align*}
which implies that
\begin{align}\label{wola5}
\int_0^\infty D(x(t),v(t))\,dt \leq E_0,
\end{align}
due to $E\geq 0$. Furthermore, we obtain
\begin{align}\label{wola2}
-D(x,v)\leq -\frac{K\psi_m}{2n}\sum_{i,j=1}^n|v_i-v_j|^2 = -K\psi_m\|v\|^2,
\end{align}
thanks to \eqref{lower_bdd} and the zero momentum condition \eqref{zero}. Thus we get
\begin{align}\label{wola6}
K\psi_m\int_0^\infty \|v(t)\|^2dt \leq \int_0^\infty D(x(t),v(t))\,dt \leq E_0.
\end{align}
This together with the estimate in the proof of Lemma \ref{lem_energy} and \eqref{wola2} gives
\begin{align}\label{wola1}
\frac{1}{2}\frac{d}{dt}\|v\|^2 = -D(x,v) + MI_2 \leq -K\psi_m\|v\|^2 + MI_2.
\end{align}
On the other hand, $I_2$ can be estimated as 
\begin{align*}
|I_2| &\leq \left|\sum_{i=2}^n\phi(|x_{i-1} - x_i - z_{i-1}|^2)\lal x_{i-1} - x_i - z_{i-1}, v_{i-1} - v_i \ral\right|\nonumber\\
&\leq d_M\sum_{i=1}^n|v_{i-1}-v_i| \leq Cd_M\|v\|\leq Cd_M\sqrt{E(0)},
\end{align*}
for some $C>0$, due to \eqref{est_00} and the energy estimate. Thus $I_2$ is bounded on $[0,\infty)$. Now we come back to \eqref{wola1} to see that the derivative of $\|v\|^2$ is a sum of an integrable function $(-D)$ due to \eqref{wola5} and of a bounded function $MI_2$. Hence we have
\begin{align*}
\|v(t)\|^2 = \underbrace{2\int_0^t(-D(x(s),v(s)))\,ds}_{=:f_1} + \underbrace{2\int_0^tMI_2(s)\,ds}_{=:f_2} + \|v(0)\|^2,
\end{align*}
where $f_1$ is absolutely continuous and $f_2$ is Lipschitz continuous. Both absolutely continuous and Lipschitz continuous functions are uniformly continuous and thus $\|v\|^2$ is uniformly continuous. After recalling from \eqref{wola6} that $\|v\|^2$ is also integrable, we conclude that $\|v\|^2\to 0$ with $t\to\infty$.
\end{proof}

\begin{remark} It is clear from Theorem \ref{thm_1} that
\[
\max\lt\{ 0, |z_i| - d_M  \rt\} \leq |x_i - x_{i+1}| \leq |z_i| + d_M \quad \mbox{for} \quad i=1,\dots,n.
\]
Furthermore, we find 
\[
r_{ij}(t) \geq \lt|\sum_{k=i}^{j-1}z_k \rt| - (j-i)d_M \quad \mbox{for} \quad i < j,
\]
i.e.,
\[
\min_{1 \leq i < j \leq n}r_{ij}(t) \geq \min_{1 \leq i < j \leq n}\lt|\sum_{k=i}^{j-1}z_k \rt| - (n-1)d_M,
\]
for $t \geq 0$.
\end{remark}

\begin{remark}\label{e2const}
As a direct consequence of Theorem \ref{thm_1}, we have
\[
\left|\frac{d}{dt}E_2(x(t))\right| = MI_2(t)\leq MCd_M\|v(t)\|\to 0\quad \mbox{as} \quad t\to\infty.
\]
\end{remark}

%
%
%
%
\section{Pattern formation}\label{formation}
In this section we prove that if the particles do not collide asymptotically then they form a pattern induced by the control. We briefly discuss the problem with asymptotic collisions in Remark \ref{permut}.

We first provide an enhancement of Young's inequality that will be significantly used later for the spatial pattern formation estimate.
\begin{lemma}\label{tedious}
Let $a_1,\dots,a_{n-1}$ be a set of vectors in $\R^d$. Then 
\begin{align*}
	-\sum_{i=1}^{n-1}|a_i|^2 + \sum_{i=1}^{n-2}\lal a_i, a_{i+1}\ral \leq -\delta^n \sum_{i=1}^{n-1}|a_i|^2,
\end{align*}
where $\delta\in(0,1)$ is a sufficiently small number.
\end{lemma}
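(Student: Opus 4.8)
The plan is to turn the inequality into an exact sum-of-squares identity followed by a discrete Poincar\'e estimate; a naive route via the Young/AM--GM bound $\langle a_i,a_{i+1}\rangle\le\frac12|a_i|^2+\frac12|a_{i+1}|^2$ only yields $\sum_{i=1}^{n-2}\langle a_i,a_{i+1}\rangle\le\sum_{i=1}^{n-1}|a_i|^2$ with equality attainable, so it cannot produce any strict gain, which is why one needs the structural identity. First I would complete the square in each cross term: from $2\langle a_i,a_{i+1}\rangle=|a_i|^2+|a_{i+1}|^2-|a_i-a_{i+1}|^2$ together with the two reindexed sums $\sum_{i=1}^{n-2}|a_i|^2$ and $\sum_{i=1}^{n-2}|a_{i+1}|^2$ (which together equal $2\sum_{i=1}^{n-1}|a_i|^2-|a_1|^2-|a_{n-1}|^2$), one obtains
$$
-\sum_{i=1}^{n-1}|a_i|^2+\sum_{i=1}^{n-2}\langle a_i,a_{i+1}\rangle
=-\tfrac12|a_1|^2-\tfrac12|a_{n-1}|^2-\tfrac12\sum_{i=1}^{n-2}|a_i-a_{i+1}|^2 .
$$
In particular the left-hand side is always $\le 0$ and vanishes only when $a_1=a_{n-1}=0$ and all consecutive differences are zero, i.e.\ when every $a_i=0$; this is precisely the coercivity we want, and it remains only to make it quantitative. (Equivalently, the associated $(n-1)\times(n-1)$ tridiagonal Toeplitz form has least eigenvalue $1-\cos(\pi/n)>0$, but we will not need the sharp constant.)

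Next I would bound $\sum_{i=1}^{n-1}|a_i|^2$ from above by the right-hand side of the identity. Telescoping gives $a_k=a_1+\sum_{i=1}^{k-1}(a_{i+1}-a_i)$, hence $|a_k|\le T:=|a_1|+\sum_{i=1}^{n-2}|a_{i+1}-a_i|$ for every $k\in\{1,\dots,n-1\}$, so $\sum_{k=1}^{n-1}|a_k|^2\le(n-1)T^2$. Since $T$ is a sum of $n-1$ nonnegative terms, Cauchy--Schwarz yields $T^2\le(n-1)\bigl(|a_1|^2+\sum_{i=1}^{n-2}|a_{i+1}-a_i|^2\bigr)$, and therefore
$$
\sum_{i=1}^{n-1}|a_i|^2\le(n-1)^2\Bigl(|a_1|^2+\sum_{i=1}^{n-2}|a_i-a_{i+1}|^2\Bigr)
\le 2(n-1)^2\Bigl(\tfrac12|a_1|^2+\tfrac12|a_{n-1}|^2+\tfrac12\sum_{i=1}^{n-2}|a_i-a_{i+1}|^2\Bigr).
$$
Combining this with the identity above gives
$$
-\sum_{i=1}^{n-1}|a_i|^2+\sum_{i=1}^{n-2}\langle a_i,a_{i+1}\rangle\le-\frac{1}{2(n-1)^2}\sum_{i=1}^{n-1}|a_i|^2 .
$$

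Finally, since $2(n-1)^2\ge 1$ for $n\ge 2$, any $\delta\in(0,1)$ with $\delta^n\le\frac{1}{2(n-1)^2}$ — for instance $\delta:=\bigl(2(n-1)^2\bigr)^{-1/n}$ — gives the claim. I do not anticipate a genuine obstacle: the only nonroutine step is spotting the sum-of-squares identity in the first paragraph, and everything afterwards is elementary. An alternative would be induction on $n$, peeling off $a_{n-1}$ and using a weighted Young inequality $\langle a_{n-2},a_{n-1}\rangle\le\frac\varepsilon2|a_{n-2}|^2+\frac1{2\varepsilon}|a_{n-1}|^2$ with $\varepsilon$ tuned to the induction hypothesis, but propagating the precise constant $\delta^n$ through the induction is fiddlier than the direct estimate, so I would prefer the route above. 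Note the factor $\delta^n$ in the statement is far from optimal — the argument yields the polynomial rate $(2(n-1)^2)^{-1}$ — but since the lemma is invoked only with \emph{some} sufficiently small $\delta$, this weaker form suffices.
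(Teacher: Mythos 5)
Your proof is correct, and it takes a genuinely different route from the paper's. The paper proves the lemma by an iterated weighted Young inequality: it bounds each cross term by $\langle a_i,a_{i+1}\rangle\leq\epsilon_i|a_i|^2+\tfrac{1}{4\epsilon_i}|a_{i+1}|^2$ with a recursively defined weight sequence $\epsilon_1=1-\delta$, $\epsilon_i=1-\delta^i-1/(4\epsilon_{i-1})$, and shows by induction that $(1+\delta^i)/2<\epsilon_i<1-\delta^i$ for $\delta$ small, so that every coefficient $\epsilon_i+1/(4\epsilon_{i-1})$ stays below $1-\delta^n$; this is essentially your "alternative" inductive route, with the bookkeeping of the constants done through the recursion. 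Your argument instead rewrites the quadratic form exactly as $-\tfrac12|a_1|^2-\tfrac12|a_{n-1}|^2-\tfrac12\sum_{i=1}^{n-2}|a_i-a_{i+1}|^2$ (the identity checks out) and then controls $\sum_i|a_i|^2$ by the boundary and difference terms via telescoping and Cauchy--Schwarz, a discrete Poincar\'e inequality. This buys you a cleaner structure and an explicit, much better constant $\tfrac{1}{2(n-1)^2}$ in place of the exponentially small $\delta^n$, from which the stated bound follows by choosing $\delta^n\leq\tfrac{1}{2(n-1)^2}$ as you do; the paper's recursive construction, by contrast, avoids any global identity but only delivers the exponentially degenerate rate. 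Both proofs are elementary and complete; yours is arguably the more transparent and quantitatively sharper of the two, and since Lemma~\ref{tedious} is only invoked in Proposition~\ref{pat} with \emph{some} positive coercivity constant, the improved rate would propagate harmlessly (indeed favourably) through the pattern-formation estimate.
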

\begin{proof}
Let $\delta > 0$ be a small number to be specified later. We take $\epsilon_1 = 1-\delta$ and use Young's inequality with $\epsilon_1$ to obtain
\begin{align*}
\lal a_1,a_2\ral\leq (1-\delta)|a_1|^2 + \frac{1}{4(1-\delta)}|a_2|^2.
\end{align*}
We take $\epsilon_i = 1-\delta^i- 1/(4\epsilon_{i-1})$ for $i\geq 2$. Then it is easy to prove by induction that $(1+\delta^i)/2 <\epsilon_i<1-\delta^i$ provided that $\delta$ is sufficiently small (for example $\delta=1/4$). Thus, by the recursive definition of $\epsilon_i$, for all $i=1,...,n-1$, we find $0<\epsilon_i+ 1/(4\epsilon_{i-1})\leq 1-\delta^n$. Hence, by Young's inequality, we have
$$\begin{aligned}
\sum_{i=1}^{n-2}\lal a_i, a_{i+1}\ral &\leq \sum_{i=1}^{n-2}\epsilon_i|a_i|^2 +\frac{1}{4\epsilon_i}|a_{i+1}|^2 \cr
&= \epsilon_1|a_1|^2+\sum_{i=2}^{n-1}\lt(\epsilon_i + \frac{1}{4\epsilon_{i-1}}\rt)|a_i|^2\cr
&\leq (1-\delta^n) \sum_{i=1}^{n-1}|a_i|^2.
\end{aligned}$$
This provides the desired result.
\end{proof}
We are now in a position to state the asymptotic spatial pattern formation result.
\begin{proposition}\label{pat}
Suppose that the assumptions of Theorem \ref{thm_1} are satisfied. Assume further that
\begin{align}\label{noas}
	\liminf_{t\to\infty}|x_i(t)-x_j(t)|>0,
\end{align}
for all $i,j \in \{1,\dots,n\}$. Then there exists a limit $\lim_{t\to\infty}x(t)=:x^\infty$ satisfying
\begin{align}\label{tes}
	x_i^\infty=x_{i-1}^\infty-z_{i-1}\qquad\mbox{for all} \quad i=2,\dots,n.
\end{align}
\end{proposition}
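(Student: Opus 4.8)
The plan is to exploit the total-energy identity of Lemma \ref{lem_energy} together with the flocking conclusions of Theorem \ref{thm_1}. First I would observe that, since $\|v(t)\|^2 \to 0$ and (Remark \ref{e2const}) $\frac{d}{dt}E_2(x(t)) \to 0$, the dissipation identity forces $D(x(t),v(t)) \to 0$ as well; combined with the uniform lower bound $\psi_m$ on the kernel (valid because relative positions stay bounded by $C_0$, cf.\ \eqref{lower_bdd}), everything about the velocities decays. The substantive point is to upgrade "$\|v\|^2 \in L^1$ and $\|v\|^2 \to 0$" to the statement that $x(t)$ actually \emph{converges}. For this I would write, for any $i$, $x_i(t) - x_i(s) = \int_s^t v_i(\tau)\,d\tau$ and try to show the right-hand side is Cauchy as $s,t\to\infty$; the natural candidate is that $\|v\|$ is itself integrable on $[0,\infty)$ (not just $\|v\|^2$), which I would extract from a differential inequality for $\|v\|^2$ of the form $\frac{d}{dt}\|v\|^2 \le -2K\psi_m\|v\|^2 + 2M|I_2|$, using the Lemma \ref{tedious} enhancement of Young's inequality to absorb the control term. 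Concretely, $I_2 = -\sum_{i=2}^n \phi(|x_{i-1}-x_i-z_{i-1}|^2)\langle x_{i-1}-x_i-z_{i-1}, v_{i-1}-v_i\rangle$, and one would like to pair this with the evolution of the "formation error" $a_i := x_i - x_{i-1} + z_{i-1}$, whose derivative is $v_i - v_{i-1}$; Lemma \ref{tedious} is precisely tailored to bound the cross-terms $\sum \langle a_i, a_{i+1}\rangle$ that arise, yielding a genuine decay rate for a combined Lyapunov functional $\|v\|^2 + (\text{something})\cdot E_2(x)$.

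The key steps, in order, would be: (1) record from Theorem \ref{thm_1} and its remarks that $\|v(t)\| \to 0$, that $|x_{i-1}(t)-x_i(t)-z_{i-1}| \le d_M$, and that $\psi(r_{ij}(t)) \ge \psi_m$; (2) set up the formation-error vector $a(t) = (a_2,\dots,a_n)$ with $a_i = x_i - x_{i-1} + z_{i-1}$ and compute $\dot a_i = v_i - v_{i-1}$, noting $\|\dot a\| \le C\|v\|$; (3) build a Lyapunov functional — the most robust choice is to combine the energy decay with the observation that $E_2(x(t))$ is nonincreasing and bounded below, hence converges, so $\sum_{i=2}^n \int_0^{|a_i(t)|^2}\phi(r)\,dr$ converges; since $\phi>0$ this shows each $|a_i(t)|$ converges to some limit $\ell_i \ge 0$; (4) prove $\ell_i = 0$ for all $i$. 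For step (4) I would use that $\int_0^\infty \|v\|^2\,dt < \infty$ together with a uniform continuity / barrier argument: if $|a_i(t)| \to \ell_i > 0$, then looking at $\frac{d}{dt}\langle v_{i-1}-v_i, a_i\rangle$ and integrating should produce a contradiction with $\|v\| \in L^2$ unless the "restoring force" $\phi(|a_i|^2)a_i$ vanishes in the limit, forcing $\ell_i = 0$. Equivalently, one can argue that $\int_0^\infty |I_2(t)|\,dt$ would have to be infinite (the control does nonvanishing work against a persistent nonzero error while velocities keep decaying to zero), contradicting integrability of $D$ via the energy identity. Once $|a_i(t)| \to 0$, i.e.\ $x_i - x_{i-1} \to -z_{i-1}$, the assumption \eqref{noas} guarantees no degeneracy, and since $\|v\| \to 0$ with $\|v\| \in L^2$ one gets (using the differential inequality from step (3) to get an actual exponential-type bound on $\|v\|$, hence $\|v\| \in L^1$) that each $x_i(t)$ is Cauchy; call the limit $x_i^\infty$. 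Passing to the limit in $a_i(t) \to 0$ gives \eqref{tes}.

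The main obstacle I expect is step (4): converting the $L^2$/vanishing information on $v$ into the assertion that the formation errors $a_i$ actually vanish (rather than merely converge to some constant vector), and then into genuine convergence of positions. The difficulty is that $\|v\| \in L^2$ alone does not give $\|v\| \in L^1$; one needs the dissipative structure — the term $-2K\psi_m\|v\|^2$ in $\frac{d}{dt}\|v\|^2$ — to be dominant enough, which in turn requires controlling $|I_2|$ by (a small multiple of) $\|v\|$ plus a term that itself decays, and this is exactly where Lemma \ref{tedious} and the split into hypotheses $\beta \le 1$ versus $\beta > 1$ with \eqref{apt} become essential (they guarantee $d_M$ is finite and, under \eqref{noas}, that $\phi(|a_i|^2)$ stays bounded below along the flow so the control force is genuinely restoring). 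I would therefore spend most of the effort constructing the right combined Lyapunov functional $\mathcal{L}(t) = \|v(t)\|^2 + \lambda \sum_{i=2}^n \int_0^{|a_i(t)|^2}\phi(r)\,dr$ for a suitable small $\lambda$, showing $\frac{d}{dt}\mathcal{L} \le -c\,\mathcal{L}$ for some $c>0$ using Lemma \ref{tedious} to handle the indefinite cross-terms, and concluding exponential decay of both $\|v\|$ and the formation errors; convergence of $x(t)$ and the identity \eqref{tes} then follow immediately.
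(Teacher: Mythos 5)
Your overall instinct---combine the energy identity with a cross term pairing the formation errors $a_i:=x_i-x_{i+1}-z_i$ against the relative velocities, and use Lemma \ref{tedious} to control the indefinite terms coming from $u_i-u_{i+1}$---points in the right direction, but the concrete mechanism you commit to does not close. The Lyapunov functional $\mathcal{L}=\|v\|^2+\lambda\sum_i\int_0^{|a_i|^2}\phi(r)\,dr$ cannot satisfy $\frac{d}{dt}\mathcal{L}\le -c\,\mathcal{L}$: its derivative equals $-2D+2(M-\lambda)I_2$, which vanishes whenever $v=0$ no matter how large the $a_i$ are, so no coercivity in the position variable is available; moreover no $\langle a_i,a_{i+1}\rangle$ cross terms ever appear in that derivative, so Lemma \ref{tedious} has nothing to act on there. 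The object that must be differentiated is the \emph{cross} functional $\sum_{i}\phi_i\langle a_i,\,v_i-v_{i+1}\rangle$ itself: its derivative produces, from the control, exactly $-2\sum_i\phi_i^2|a_i|^2+2\sum_i\langle\phi_i a_i,\phi_{i+1}a_{i+1}\rangle$, which Lemma \ref{tedious} converts into $-c\sum_i\phi_i|a_i|^2$, plus remainders controlled by $E_1$ and by $\psi(r_{ij})$. This is also where hypothesis \eqref{noas} actually enters: it supplies the \emph{upper} bound $\psi(r_{ij})\le C(\rho)$ on the singular kernel for large times (a lower bound on $\phi$ along the flow already follows from $|a_i|\le d_M$, with no need of \eqref{noas}). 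Since the cross functional is bounded and $E_1\in L^1(0,\infty)$, integration gives $\sum_i\phi_i|a_i|^2\in L^1(t_0,\infty)$, and Lipschitz continuity of this quantity (a Barbalat argument) yields $a_i\to0$. No exponential rate, and no $\|v\|\in L^1$, is needed or obtainable here.

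Two further gaps. First, ``$E_2$ is nonincreasing'' is false (only $E_1+E_2$ is); and even granting that $E_2(t)$ converges (which does follow since $E_1\to0$), this controls only the \emph{sum} $\sum_i\int_0^{|a_i|^2}\phi(r)\,dr$ and does not imply that each $|a_i(t)|$ converges individually, so the conclusion of your step (3) is a non sequitur. Second, to pass from $x_i-x_{i+1}\to z_i$ to convergence of each $x_i(t)$ you do not need $v\in L^1$ or a Cauchy argument on $\int v$: by \eqref{zero} the center of mass is fixed at the origin, so each $x_i$ is an explicit linear combination of $x_c$ and the differences $x_j-x_{j+1}$, and convergence of the differences alone forces convergence of the positions and the identity \eqref{tes}. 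Chasing $\|v\|\in L^1$ via exponential decay is therefore both unnecessary and, for this nonlinear singular system, beyond what your functional can deliver.
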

\begin{proof}For notational simplicity, in the rest of the proof, we denote
\begin{align*}
\phi_i:=\phi(|x_i-x_{i+1}-z_i|^2).
\end{align*}
Observe that by assumption \eqref{noas}, there exists a minimal distance between particles $\rho>0$ on the time interval $[t_0,\infty)$ for sufficiently large $t_0$. Thus we have
\begin{align*}
r_{ij}(t)\geq\rho \quad \mbox{for all} \quad i,j=1,\dots,n,
\end{align*}
for all $t\in[t_0,\infty)$. Consequently, this implies
\begin{align*}
\psi(r_{ij})\leq C(\rho)\quad\mbox{in} \quad [t_0,\infty).
\end{align*}
Let us first show that
\begin{align}\label{cp0}
x_i-x_{i+1} \to z_i \quad \mbox{as} \quad t \to \infty, 
\end{align}
for all $i=1,\dots,n-1$. It follows from the equation for $v_i$ in \eqref{main_eq} that
$$\begin{aligned}
&\frac{d}{dt}\sum_{i=1}^{n-1}\phi_i \lal x_i-x_{i+1}-z_i, v_i-v_{i+1}\ral \cr
&\quad = 2\sum_{i=1}^{n-1}\phi_i'\lal x_i-x_{i+1}-z_i, v_i-v_{i+1}\ral^2  +\sum_{i=1}^{n-1}\phi_i|v_i-v_{i+1}|^2\cr
&\qquad + \sum_{i=1}^{n-1}\phi_i \lt\lal x_i-x_{i+1}-z_i, \frac{K}{n}\sum_{j=1}^n \psi(r_{ij})(v_j - v_i)- \frac{K}{n}\sum_{j=1}^n \psi(r_{(i+1)j})(v_j - v_{i+1})\rt\ral\\
&\qquad + M\sum_{i=1}^{n-1}\phi_i \lal x_i-x_{i+1}-z_i , u_i -u_{i+1}\ral.
\end{aligned}$$
We then estimate each summand on the right-hand side separately. Clearly, we get
\[
\sum_{i=1}^{n-1}\phi_i|v_i-v_{i+1}|^2\leq 2E_1.
\]
Furthermore, by using Young's inequality with $\epsilon > 0$, we get
\[
2\sum_{i=1}^{n-1}\phi_i'\lal x_i-x_{i+1}-z_i, v_i-v_{i+1}\ral^2  \leq \epsilon\frac{|\phi'|_\infty^2}{\phi(d_M)}\sum_{i=1}^{n-1}\phi_i|x_i-x_{i+1}-z_i|^2 + C(\epsilon)E_1
\]
and
$$\begin{aligned}
& \sum_{i=1}^{n-1}\phi_i \lt\lal x_i-x_{i+1}-z_i , \frac{K}{n}\sum_{j=1}^n \psi(r_{ij})(v_j - v_i)- \frac{K}{n}\sum_{j=1}^n \psi(r_{(i+1)j})(v_j - v_{i+1})\rt\ral\cr
&\quad \leq \epsilon\sum_{i=1}^{n-1}\phi_i|x_i-x_{i+1}-z_i|^2 + C(\rho,\epsilon)E_1,
\end{aligned}$$
for any $\epsilon>0$. We finally estimate the term including the control $u$ as 
\begin{align*}
&\sum_{i=1}^{n-1}\phi_i \lal x_i-x_{i+1}-z_i, u_i -u_{i+1}\ral\cr
&\ = -2\phi_1^2|x_1-x_2-z_1|^2 +\lal \phi_1(x_1-x_2-z_1), \phi_2(x_2-x_3-z_2)\ral\cr
&\quad +\sum_{i=2}^{n-2}\phi_i\lal x_i-x_{i+1}-z_i, \cr
&\hspace{1.5cm}
\phi_{i-1}(x_{i-1}-x_{i}-z_{i-1})-2\phi_{i}(x_{i}-x_{i+1}-z_{i}) + \phi_{i+1}(x_{i+1}-x_{i+2}-z_{i+1})\ral\cr
&\quad +\phi_{n-1}\lal x_{n-1}-x_{n}-z_{n-1}, \cr
&\hspace{1.5cm} \phi_{n-2}(x_{n-2}-x_{n-1}-z_{n-2})-2\phi_{n-1}(x_{n-1}-x_{n}-z_{n-1})\ral\cr
&\ =2\sum_{i=1}^{n-2}\lal\phi_i(x_i-x_{i+1}-z_i),\phi_{i+1}(x_{i+1}-x_{i+2}-z_{i+1})\ral -2\sum_{i=1}^{n-1}\phi_i^2|x_i-x_{i+1}-z_i|^2\cr
&\ =:{\mathcal L}.
\end{align*}
By Lemma \ref{tedious}, there exists a positive constant $\delta>0$ such that
\begin{align*}
M{\mathcal L}\leq -2M\delta^n\sum_{i=1}^{n-1}\phi_i^2|x_i-x_{i+1}-z_i|^2\leq -\frac{2M}{\phi(d_M)}\delta^n\sum_{i=1}^{n-1}\phi_i|x_i-x_{i+1}-z_i|^2.
\end{align*}
Combining the above estimates with suitably chosen $\epsilon>0$, we end up with
\bq\label{cp1}
\frac{d}{dt}\sum_{i=1}^{n-1}\phi_i\lal x_i-x_{i+1}-z_i, v_i-v_{i+1}\ral\leq C(\rho)E_1-\frac{M}{\phi(d_M)}\delta^n\sum_{i=1}^{n-1}\phi_i|x_i-x_{i+1}-z_i|^2.
\eq
By \eqref{wola6}, the kinetic energy $E_1$ is integrable and by the following inequality
\begin{align*}
\sum_{i=1}^{n-1}\phi_i|\lal x_i-x_{i+1}-z_i, v_i-v_{i+1}\ral|\leq Cd_M\sqrt{E_0},
\end{align*}
for some $C>0$ independent of $t$, the left-hand side of \eqref{cp1} is a derivative of a bounded function. This implies that $\sum_{i=1}^{n-1}\phi_i|x_i-x_{i+1}-z_i|^2$ is also integrable in $[t_0,\infty)$, and thus we conclude that $\sum_{i=1}^{n-1}|x_i-x_{i+1}-z_i|^2\to 0$ as $t\to\infty$, since it is Lipschitz continuous. Hence, \eqref{cp0} is proved. This, together with the fact that we fixed $0$ as the center of mass (recall \eqref{zero}) and the help of basic linear algebra implies that $x_i(t)$ is convergent with $t\to\infty$, and its limit $x_i^\infty$ satisfies \eqref{tes} for all $i=1,\cdots, n$. This completes the proof. 
\end{proof}

\begin{remark}\label{permut}
It appears that the impossibility of collisions between particles, ensured by Theorem \ref{thm_ext}, plays a role in pattern formation and is the reason for the need of assumption \eqref{noas}. Indeed, if we consider the simplest case of two particles on a line, with $z_1=-1$, then, the resulting pattern has to be of the form $x_2^\infty = x_1^\infty+1>x_1^\infty$. However, if initially $x_1(0)>x_2(0)$, at some point the particles change order, and thus collide, which is impossible due to Theorem \ref{thm_ext}. It is also clear intuitively: the particles are forbidden from colliding and if the control would result in a collision, the singularity of the communication weight $\psi$ prevails and the pattern cannot be formed. We numerically investigate this issue, see Figure \ref{fig:lines} below. The one-dimensional case is special in the sense that the collisions are unavoidable if the order of the particles has to be changed. It is however a much more complex question in $d\geq 2$. See Figure \ref{fig:degenerate2d} and the corresponding discussion for a particular 2 dimensional case. 
\end{remark}
\textcolor{black}{
A full discussion of the {\it a priori} assumption \eqref{noas} is outside the scope of this paper. In fact, as mentioned above, Theorem \ref{thm_ext} implies that there is no collision between particles for all time when $\alpha \geq 1$, thus the assumption \eqref{noas} excludes a possible collision at $t = \infty$. On the other hand, based on the argument in Theorem \ref{thm_1}, we provide a class of initial data leading to a global-in-time minimal distance, which is strictly positive, between the particles, and thus by Proposition \ref{pat}, to the desired pattern formation. Such a class of initial data can be roughly described as follows:
}

\textcolor{black}{
{\bf (A)} {\it 
If the initial energy $E_0$ and all $|x^0_{i-1}-x_i^0-z_{i-1}|$, for $i$ between $2$ and $n$, are sufficiently small compared to $\min_{i<j}\lt|\sum_{k=i}^{j-1}z_k \rt|$, then 
there exists a positive global-in-time minimal distance between the particles and the pattern forms.
}
}

We provide the above statement explicitly in the case of $\beta\in(0,1)$.
\begin{corollary}
Let $\beta\in(0,1)$. Under the assumptions of Theorem \ref{thm_1} with the initial data satisfying for all pairs $1\leq i<j\leq n$
\begin{align}\label{v4asum}
\lt|\sum_{k=i}^{j-1}z_k \rt|^2 &> \left(\frac{j-i}{j+1-i}\right)^{2-\frac{1}{1-\beta}}(j+1-i)\\
&\quad \times\left(\left((1-\beta)C_0^* + 1\right)^\frac{1}{1-\beta} - 1+ \sum_{k=i+1}^{j}|x^0_{k-1}-x^0_k -z_{k-1}|^2\right)\nonumber
\end{align}
with 
\begin{align*}
C_0^* := \frac{1}{2Mn}\sum_{i,j=1}^n|v_i^0-v_j^0|^2,
\end{align*}
there exists a global-in-time minimal distance between the particles. Therefore the particles converge asymptotically to a pattern as indicated in Proposition \ref{pat}.
\end{corollary}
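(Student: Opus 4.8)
The plan is to use \eqref{v4asum} to keep every inter-particle distance bounded away from zero \emph{uniformly in time}, so that hypothesis \eqref{noas} of Proposition~\ref{pat} is met, and then simply invoke that proposition.

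First I would fix the framework. Since $\beta\in(0,1)$ we are in case (i) of Theorem~\ref{thm_1} ($\beta\le1$), so there is a unique global smooth solution, no two particles ever collide, flocking holds, and Lemma~\ref{lem_energy} gives the energy bound \eqref{est_000}: $\sum_{i=2}^n\int_{|x_{i-1}^0-x_i^0-z_{i-1}|^2}^{|x_{i-1}(t)-x_i(t)-z_{i-1}|^2}\phi(r)\,dr\le C_0^*$ for all $t\ge0$, with $C_0^*=\frac1{2Mn}\sum_{i,j}|v_i^0-v_j^0|^2$. For $\phi(r)=(1+r)^{-\beta}$, $\beta\in(0,1)$, the primitive $\Phi(s):=\int_0^s\phi(r)\,dr=\frac1{1-\beta}\big((1+s)^{1-\beta}-1\big)$ is increasing, concave, vanishes at $0$ (hence subadditive), satisfies $\Phi(s)\le s$, and inverts to $\Phi^{-1}(y)=(1+(1-\beta)y)^{1/(1-\beta)}-1$; note $\Phi^{-1}(C_0^*)=((1-\beta)C_0^*+1)^{1/(1-\beta)}-1$, which is exactly the term appearing in \eqref{v4asum}. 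These elementary properties of $\Phi$ are what make the exponents in \eqref{v4asum} show up.

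The core step is a uniform bound on block mismatches. Set $w_m(t):=x_{m-1}(t)-x_m(t)-z_{m-1}$. A telescoping identity gives, for $i<j$, $x_i(t)-x_j(t)=\sum_{k=i}^{j-1}z_k+\sum_{m=i+1}^{j}w_m(t)$, so by the triangle and Cauchy--Schwarz inequalities $r_{ij}(t)\ge\big|\sum_{k=i}^{j-1}z_k\big|-\big((j-i)\sum_{m=i+1}^{j}|w_m(t)|^2\big)^{1/2}$. It thus suffices to bound $\sum_{m=i+1}^{j}|w_m(t)|^2$, uniformly in $t$, by $(j-i)^{-1}$ times the right-hand side of \eqref{v4asum}. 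I would derive this from \eqref{est_000}, rewritten as $\sum_{m=2}^n\Phi(|w_m(t)|^2)\le C_0^*+\sum_{m=2}^n\Phi(|w_m^0|^2)$: discard the nonnegative terms coming from springs outside the block $\{i+1,\dots,j\}$, use subadditivity of $\Phi$ to write $\Phi\big(\sum_{m=i+1}^{j}|w_m(t)|^2\big)\le\sum_{m=i+1}^{j}\Phi(|w_m(t)|^2)$, and invert $\Phi$ (using $\Phi(s)\le s$ on the initial mismatches). Optimizing the constant in the Cauchy--Schwarz/Young steps over the block size $j-i$ is what produces the prefactor $\big((j-i)/(j+1-i)\big)^{2-1/(1-\beta)}(j+1-i)$ in \eqref{v4asum}.

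Once this uniform bound is established, \eqref{v4asum} yields $(j-i)\sum_{m=i+1}^{j}|w_m(t)|^2<\big|\sum_{k=i}^{j-1}z_k\big|^2$ for all $t\ge0$, hence $\inf_{t\ge0}\min_{i<j}r_{ij}(t)\ge\varepsilon$ for some $\varepsilon>0$; this is the claimed global-in-time minimal distance. In particular $\liminf_{t\to\infty}|x_i(t)-x_j(t)|>0$ for all $i,j$, so \eqref{noas} holds and Proposition~\ref{pat} gives $x(t)\to x^\infty$ with $x_i^\infty=x_{i-1}^\infty-z_{i-1}$, i.e. the asserted pattern. I expect the main obstacle to be precisely the core step: the energy budget $C_0^*$ is shared by all $n-1$ springs, whereas \eqref{v4asum} only involves the initial mismatches inside the block $\{i+1,\dots,j\}$, so reconciling the global estimate with the block-local conclusion — together with tracking the sharp prefactor through the interplay of $\Phi$, its inverse, and Cauchy--Schwarz — requires some care (for instance a monotonicity-in-$t$ argument in the spirit of the proof of Theorem~\ref{thm_ext}, or a finer splitting of the energy).
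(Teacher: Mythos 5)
Your overall strategy --- use \eqref{v4asum} to extract a uniform-in-time positive lower bound on every $r_{ij}$, verify \eqref{noas}, and invoke Proposition \ref{pat} --- is exactly the paper's, and your telescoping reduction of $r_{ij}(t)$ to the mismatches $w_m(t)=x_{m-1}-x_m-z_{m-1}$ matches the first line of the paper's proof. The problem is that the ``core step'' you defer is where the entire proof lives, and the route you sketch for it does not close. If you drop the out-of-block terms only from the left-hand side of $\sum_{m=2}^n\Phi(|w_m(t)|^2)\le C_0^*+\sum_{m=2}^n\Phi(|w_m^0|^2)$, the right-hand side still carries the initial mismatches of \emph{all} $n-1$ springs, whereas \eqref{v4asum} budgets only for those inside the block $\{i+1,\dots,j\}$; discarding them from the right as well is unjustified. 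Moreover, since $\Phi^{-1}$ is convex and superadditive with $\Phi^{-1}(b)\ge b$, inverting your bound gives $\sum_{\mathrm{block}}|w_m(t)|^2\le\Phi^{-1}\bigl(C_0^*+\sum_{m=2}^n|w_m^0|^2\bigr)\ge\Phi^{-1}(C_0^*)+\sum_{m=2}^n|w_m^0|^2$, which is strictly weaker than the additive, block-local quantity appearing in \eqref{v4asum}; your sketch would therefore prove the corollary only under a strictly stronger hypothesis.

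The paper's resolution of the difficulty you flag is to bound each $|w_m(t)|$ individually by the single threshold $d_M$ of \eqref{new1} (via \eqref{est_00}), rather than bounding the block sum of $\Phi(|w_m(t)|^2)$. Because $d_M\ge|w_m^0|$ for \emph{every} $m$, the defining identity $(n-1)\Phi(d_M^2)-\sum_{m=2}^n\Phi(|w_m^0|^2)=C_0^*$ lets one discard the nonnegative out-of-block differences $\Phi(d_M^2)-\Phi(|w_m^0|^2)$ and land on the genuinely block-local inequality $(j-i)\Phi(d_M^2)\le C_0^*+\sum_{k=i+1}^{j}\Phi(|w_k^0|^2)$. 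The prefactor $\bigl(\tfrac{j-i}{j+1-i}\bigr)^{2-1/(1-\beta)}(j+1-i)$ then comes not from optimizing Cauchy--Schwarz over the block size, as you suggest, but from two properties of $\Phi$: Jensen's inequality over the $j+1-i$ arguments $C_0^{**}=\Phi^{-1}(C_0^*)$ and $|w_k^0|^2$, and the elementary inequality $\Phi(c^{1/(1-\beta)}s)\le c\,\Phi(s)$ for $c\in(0,1)$ (the function $f_c$ in the proof of \eqref{v41}), which turns $\tfrac{j-i}{j+1-i}\Phi(d_M^2)$ into $\Phi$ of a rescaled argument so that $\Phi^{-1}$ can be applied. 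Once $(j-i)^2d_M^2<\bigl|\sum_{k=i}^{j-1}z_k\bigr|^2$ (i.e.\ \eqref{new2}) is established, your concluding paragraph is correct.
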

\begin{proof}
Applying the triangle inequality to \eqref{est_00} we deduce that
\[
r_{ij}(t) \geq \lt|\sum_{k=i}^{j-1}z_k \rt| - (j-i)d_M \quad \mbox{for} \quad i < j,
\]
and $t \geq 0$. It remains to fix $1\leq i<j\leq n$ and prove that
\begin{align}\label{new2}
\lt|\sum_{k=i}^{j-1}z_k \rt|^2 > (j-i)^2d_M^2.
\end{align}
The starting point is the definition of $d_M$ in \eqref{new1}, which implies that
\begin{align*}
(j-i)\Phi(d_M^2) \leq C_0^* + \sum_{k=i+1}^{j}\Phi(|x^0_{k-1}-x^0_k -z_{k-1}|^2),
\end{align*}
where $\Phi$ is the anti-derivative of $\phi$ with $\Phi(0)=0$, i.e. $\Phi(s) = \frac{1}{1-\beta}(1+s)^{1-\beta} - \frac{1}{1-\beta}$.
Note that the anti-derivative $\Phi$ is positive, increasing and concave on $[0,\infty)$. Moreover, $C_0^*\geq 0$ belongs to the image $\Phi([0,\infty))$. Set $C_0^{**}:= \Phi^{-1}(C_0^*)$. Then, by concavity of $\Phi$, we have
\begin{align}\label{v41}
\Phi\left(\left(\frac{j-i}{j+1-i}\right)^\frac{1}{1-\beta}d_M^2\right)
&\leq\frac{j-i}{j+1-i}\Phi(d_M^2) \\
&\leq \frac{\Phi(C_0^{**}) + \sum_{k=i+1}^{j}\Phi(|x^0_{k-1}-x^0_k -z_{k-1}|^2)}{j+1-i}\nonumber\\
&\leq \Phi\left(\frac{C_0^{**}+ \sum_{k=i+1}^{j}|x^0_{k-1}-x^0_k -z_{k-1}|^2}{j+1-i}\right).\nonumber
\end{align}
To see the left-most inequality in \eqref{v41}, let us consider the function
\begin{align*}
f_c(s) = c\Phi(s) - \Phi(c^\frac{1}{1-\beta}s)\qquad\mbox{for}\ s\geq 0 \ \mbox{and}\ c\in(0,1).
\end{align*}
Clearly, $f_c(0)=0$ and
\begin{align*}
f'_c(s) = c(1+s)^{-\beta} - c^\frac{1}{1-\beta}(1+c^\frac{1}{1-\beta}s)^{-\beta}.
\end{align*}
Moreover, $f'_c\geq 0$ is equivalent to
\begin{align*}
1+s\leq c^{\frac{1}{\beta}-\frac{1}{(1-\beta)\beta}}(1+c^\frac{1}{1-\beta}s),
\end{align*}
which is the case, since for $\beta\in(0,1)$, we have 
\[
\frac{1}{\beta}-\frac{1}{(1-\beta)\beta}<0 \quad \mbox{and}  \quad \frac{1}{\beta}-\frac{1}{(1-\beta)\beta} + \frac{1}{1-\beta} = 0.
\]
Thus, by monotonicity of $\Phi$ in \eqref{v41}, we have
\begin{align*}
\left(\frac{j-i}{j+1-i}\right)^\frac{1}{1-\beta}d_M^2\leq \frac{C_0^{**}+ \sum_{k=i+1}^{j}|x^0_{k-1}-x^0_k -z_{k-1}|^2}{j+1-i},
\end{align*}
which implies that \eqref{new2} holds, provided that
\begin{align*}
\lt|\sum_{k=i}^{j-1}z_k \rt|^2 > \left(\frac{j-i}{j+1-i}\right)^{2-\frac{1}{1-\beta}}(j+1-i)\Bigg(C_0^{**}+ \sum_{k=i+1}^{j}|x^0_{k-1}-x^0_k -z_{k-1}|^2\Bigg).
\end{align*}
Recalling that $C_0^{**} = \Phi^{-1}(C_0^*)$ we finish the proof.
\end{proof}
\begin{remark}
When $\beta\geq 1$, then the general statement (A) still holds, but the explicit assumption, similar to \eqref{v4asum} becomes significantly more convoluted. The main problem is that the left-most inequality in \eqref{v41} fails and needs to be compensated. However, a direct calculation leads to a condition similar to \eqref{v4asum} in the spirit of (A).
\end{remark}

%
%
%
%

\section{Numerical experiments}\label{numerics}

\begin{figure}[!t]\begin{center}
	\includegraphics[width=\columnwidth]{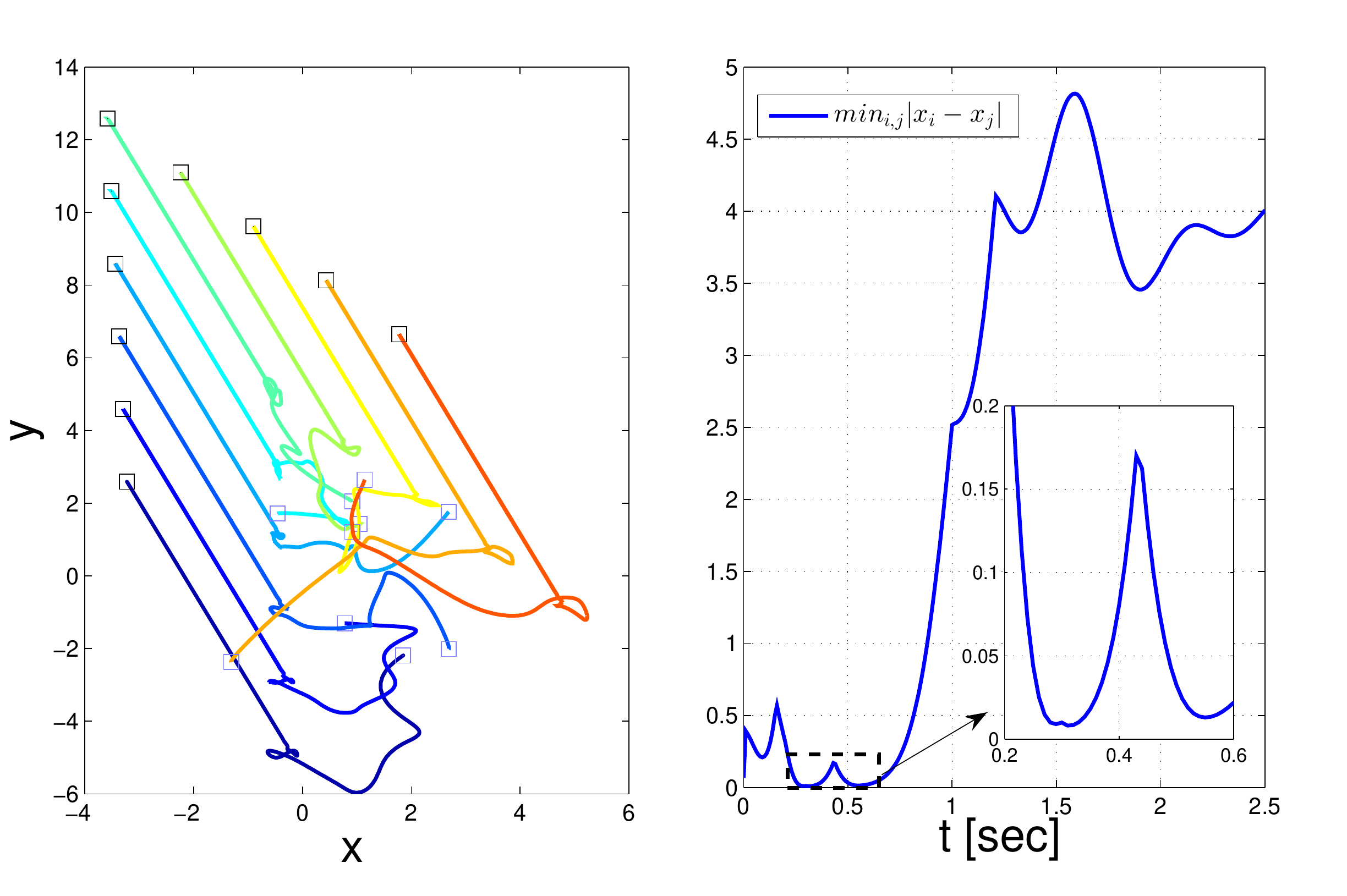}\\
	\caption{Trajectories and collision detection for a bird-like pattern} \label{fig:bird_trajectories}
\end{center}
\end{figure} 

\begin{figure}[t]\begin{center}
	\includegraphics[width=\columnwidth]{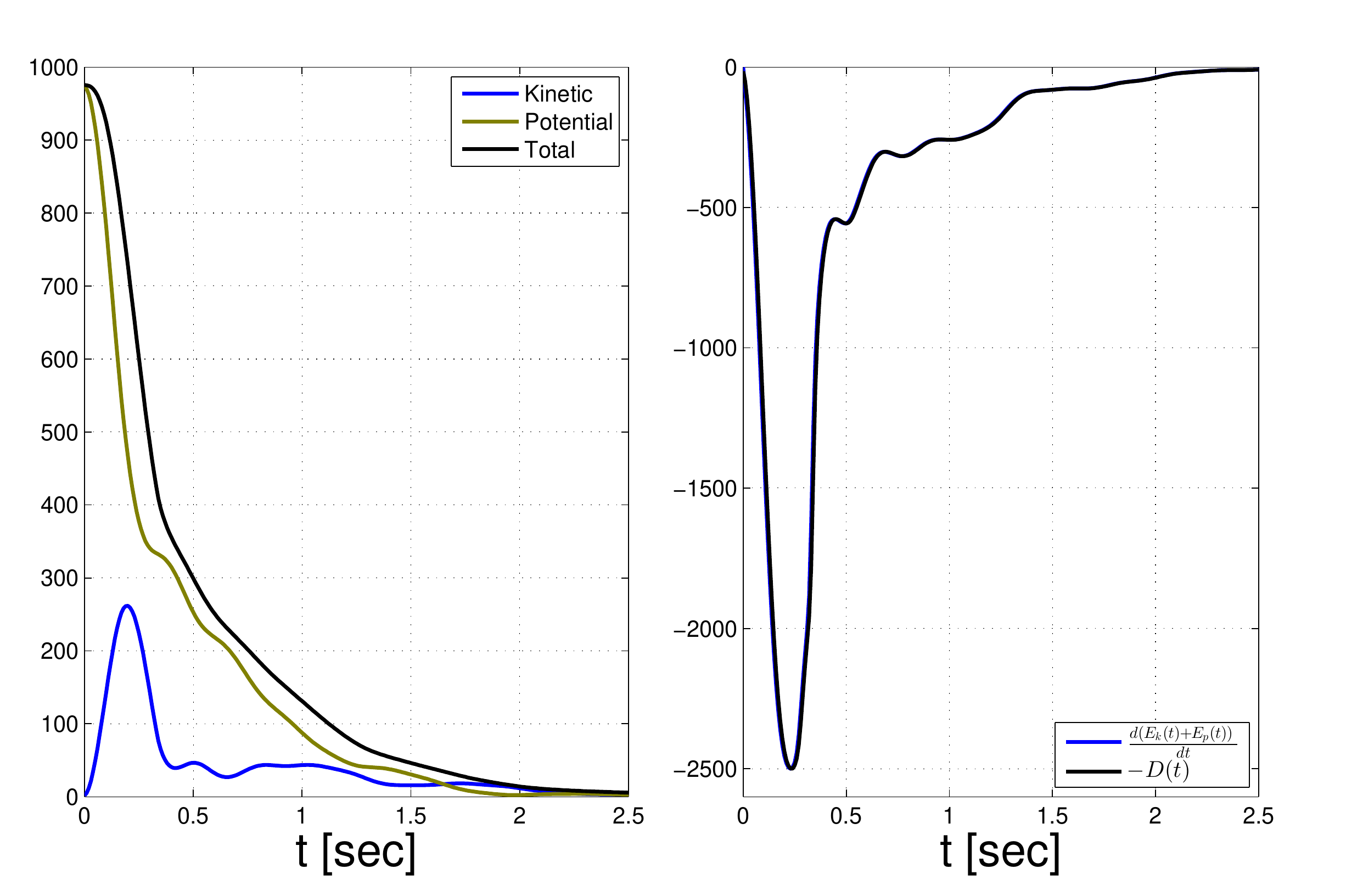}\\
	\caption{Energies (left) and dissipation (right) for a bird-like pattern} \label{fig:bird_energies}
\end{center}
\end{figure} 

%
%
%
%
%
In this section we present simulations for planar and spatial configurations with the studied model.

\paragraph{\textbf{Bird-like pattern}} We consider a bird-like flocking pattern in the 2 dimensional space with $n=10$ agents. The parameters are chosen as $K=10, M=50, \alpha=1.1$ and $\beta=0.5$. If we consider the average velocity of the flock, which remains constant,
$$v_c(t)=\frac 1n \sum_{i=1}^nv_i(t)=v_c(0)=\begin{pmatrix}
v_x \\ v_y
\end{pmatrix},$$ 
we define the angle $\theta $ of travel with respect to the $x$ axis, that is, $\theta=\arctan(v_y/v_x)$. If we chose the desired inter-particle spacings, $z_i$, to be 
$$z_i=\begin{cases} -2\left(\cos\left(\theta-\dfrac{\pi}{9}\right),\sin\left(\theta-\dfrac{\pi}{9}\right)\right) & i\leq\left\lfloor\dfrac{n}{2}\right\rfloor \vspace{2mm}\\ 
2\left(\cos\left(\theta-\dfrac{\pi}{9}\right),\sin\left(\theta-\dfrac{\pi}{9}\right)\right) & i>\left\lfloor\dfrac{n}{2}\right\rfloor \end{cases}, $$  
the control should achieve a bird-like pattern in steady state. Figure \ref{fig:bird_trajectories} left shows the trajectories followed by the agents on the $x-y$ plane. Figure \ref{fig:bird_trajectories} right shows the plot of $\min_{i,j}|x_i(t)-x_j(t)|$. A few particles are initially very close to each other, but the plot, and its zoomed in view, reveal that no collisions occur. In Figure \ref{fig:bird_energies} we show the energy decomposition of the system and its dissipation. It can be noted that the second part of Lemma \ref{lem_energy} is satisfied.

\paragraph{\textbf{A single circle}} 
Now, we consider a set of inter-particle alignments $z_i$ in the control term that achieves a circular formation pattern with a single agent at the center. Figure \ref{fig:circle} left shows the trajectories for $n=50$ agents on the plane, with the model parameters as in the previous example. The initial conditions are such that some agents are close to each other and about to collide. Figure \ref{fig:circle} right shows the plot of $\min_{i,j}|x_i(t)-x_j(t)|$ over time, illustrating that the singular influence term causes collision-avoidance. However, there is at least one pair of agents involved in near misses in two opportunities. 

\begin{figure}[!h]\begin{center}
	\includegraphics[width=\columnwidth]{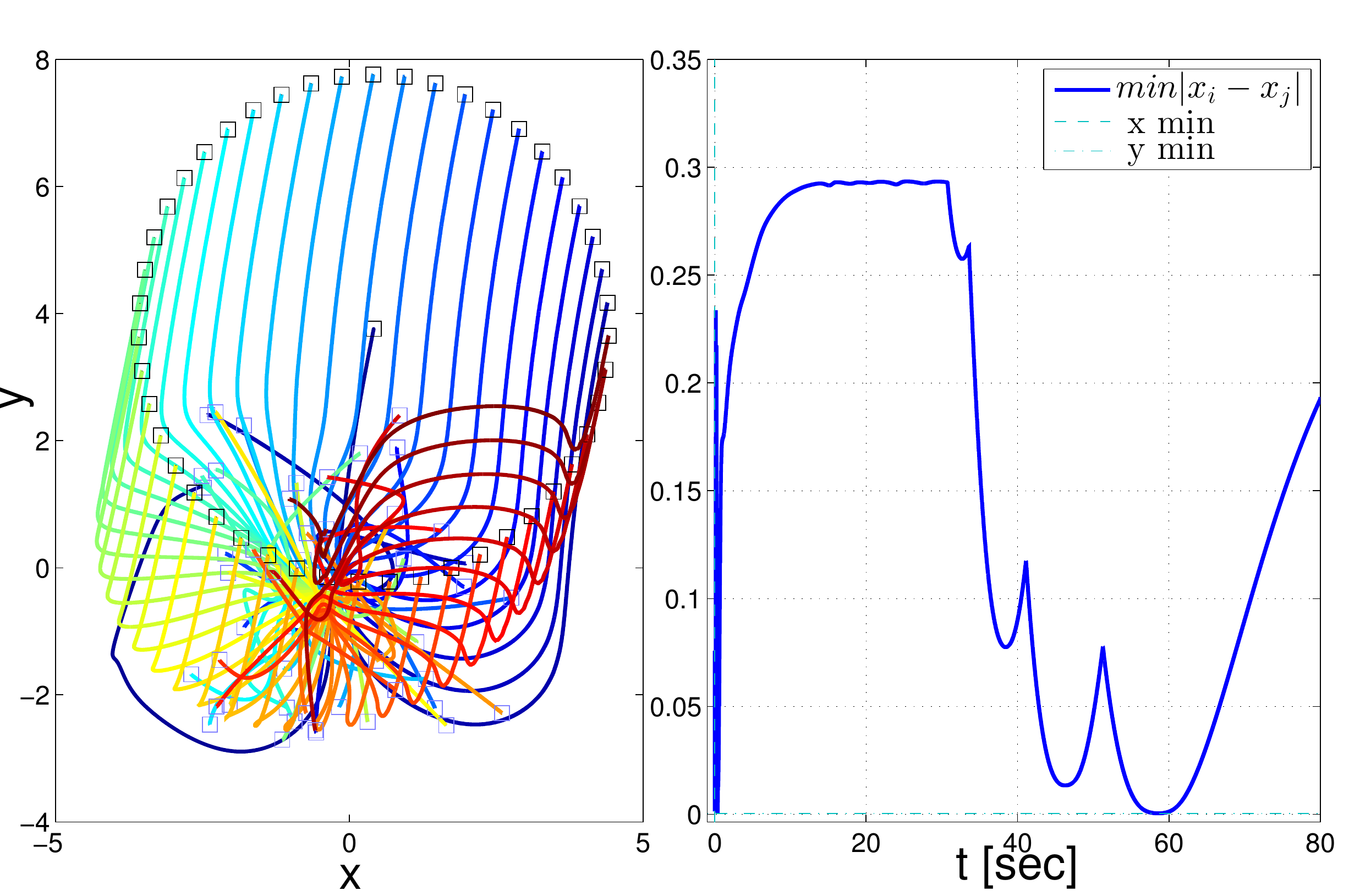}\\
	\caption{Trajectories and collision detection for a circle formation with an agent at the center} \label{fig:circle}
\end{center}

\end{figure} 

\paragraph{\textbf{Collisions in finite time}} Here we consider an example to illustrate that, even when collisions are expected from the initial conditions and desired final formation, the singular influence prevails and no collision occurs in finite time when $\alpha \geq 1$. In particular, we consider 4 particles in the 1 dimensional space. Their initial positions and desired final formations are such that in the steady state they must crossover (and therefore collide). In the following we consider 4 cases: Regular weight $\psi(r)=(1+r)^{-\alpha}$ and singular weight $\psi(r)=r^{-\alpha}$, both with $\alpha=\{0.5,1.5\}$. 

Figure \ref{fig:lines} shows the position of the 4 particles over time for the aforementioned cases. In particular we chose every parameter as before, except for $\alpha$, and the initial conditions $x_i(0)=0.5i$ for $i=1,2,3$, $x_4(0)=-1$, $v_i(0)=(-1)^ii/4$ for $i=1,2,3$ and $v_4(0)=1.$ The desired formation is given by the selection $z_i=-2$ for all $i$, which should put the agent with position $x_4(t)$ ahead of the rest, considering the initial conditions. It can be noted that the only case where the agents do not collide is for the singular weight with $\alpha=1.5$. In that case, the particles are collapsing together but never really colliding, since Theorem \ref{thm_1} prohibits collisions in finite time.

\begin{figure}[!h]\begin{center}
		\includegraphics[width=\columnwidth]{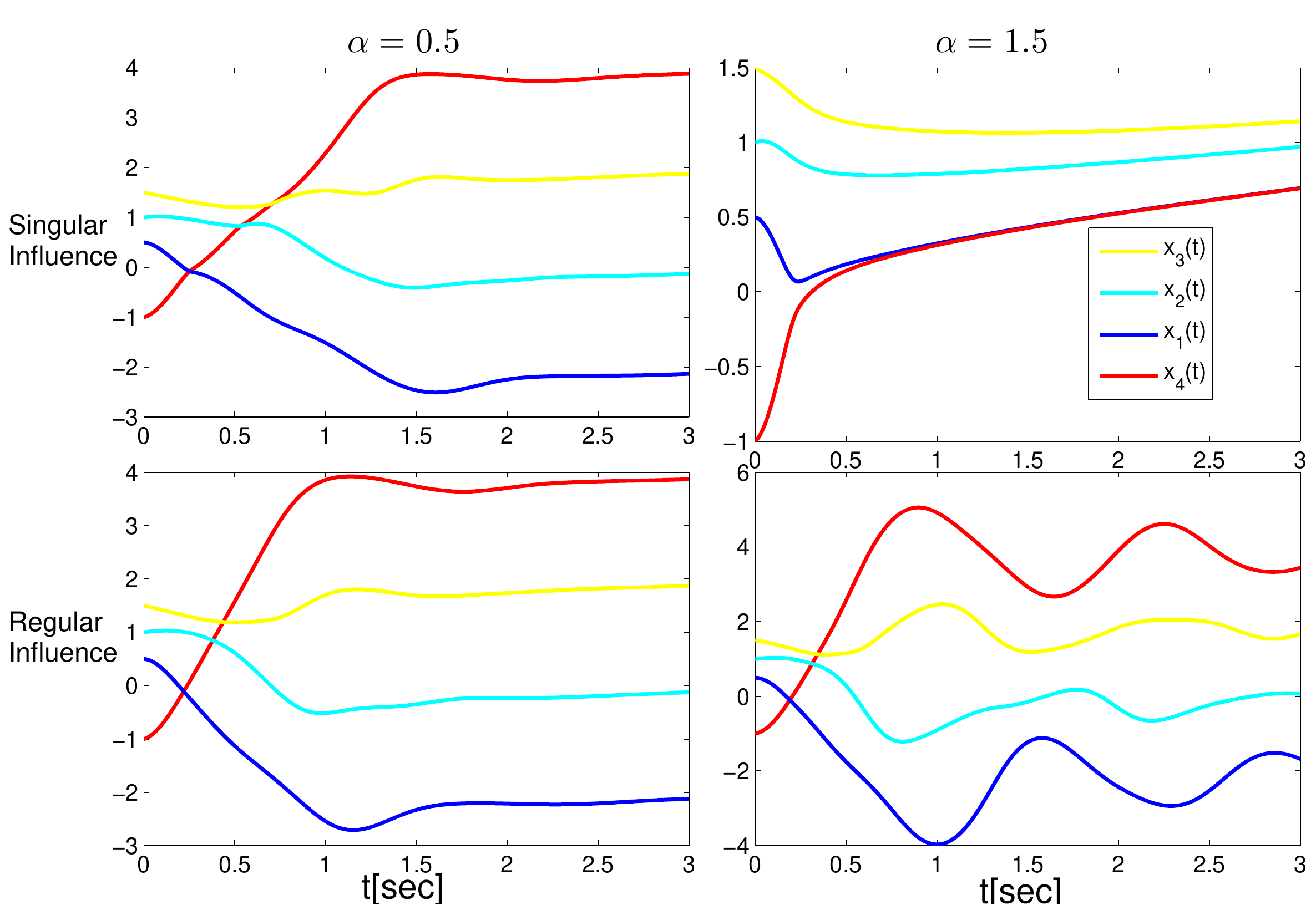}\\
		\caption{Positions over time of 4 particles on a line} \label{fig:lines}
	\end{center}
\end{figure}

We provide animations of these cases at \href{https://tinyurl.com/yapwy924}{https://tinyurl.com/yapwy924}. \paragraph{\textbf{Degenerate Cases}} The previous 1D case can be considered degenerate and it will be impossible to have a successful pattern acquisition, given the initial conditions when $\alpha\geq 1$. In higher dimensions, as mentioned in Remark \ref{permut}, the situation is harder to assess. For $\beta=0.5$, $\alpha=1.1$, $K=60$ and $M=50$, we consider 4 agents in the 2 dimensional space with initial positions given by $x_1(0)=(-1,0)$, $x_2(0)=(0,1)$, $x_3(0)=(1,0)$, and $x_4(0)=(0,-1)$, that is, the agents are positioned at the vertices of a square. We will consider that the agents are initially at rest and the desired spatial formation is encoded in the selections $z_1=(1,1)$, $z_2=(1,-1)$ and $z_3=(-1,-1)$. The control will then try to switch the agents in a diagonal fashion, since $v_i=(0,0)$ for all $i$, and we expect that the agents will end up in the positions $x_1^\infty=(1,0)$, $x_2^\infty=(0,-1)$, $x_3^\infty=(-1,0)$, and $x_4^\infty=(0,1)$. However, as seen in Figure \ref{fig:degenerate2d} left, the agents collide in pairs, at the middle points of two sides of the square. Switching the initial positions of agents 3 and 4, and again forcing them to swap positions diagonally, we observe that the agents do reach the desired formation.

\begin{figure}[!t]\begin{center}
		\includegraphics[width=\columnwidth]{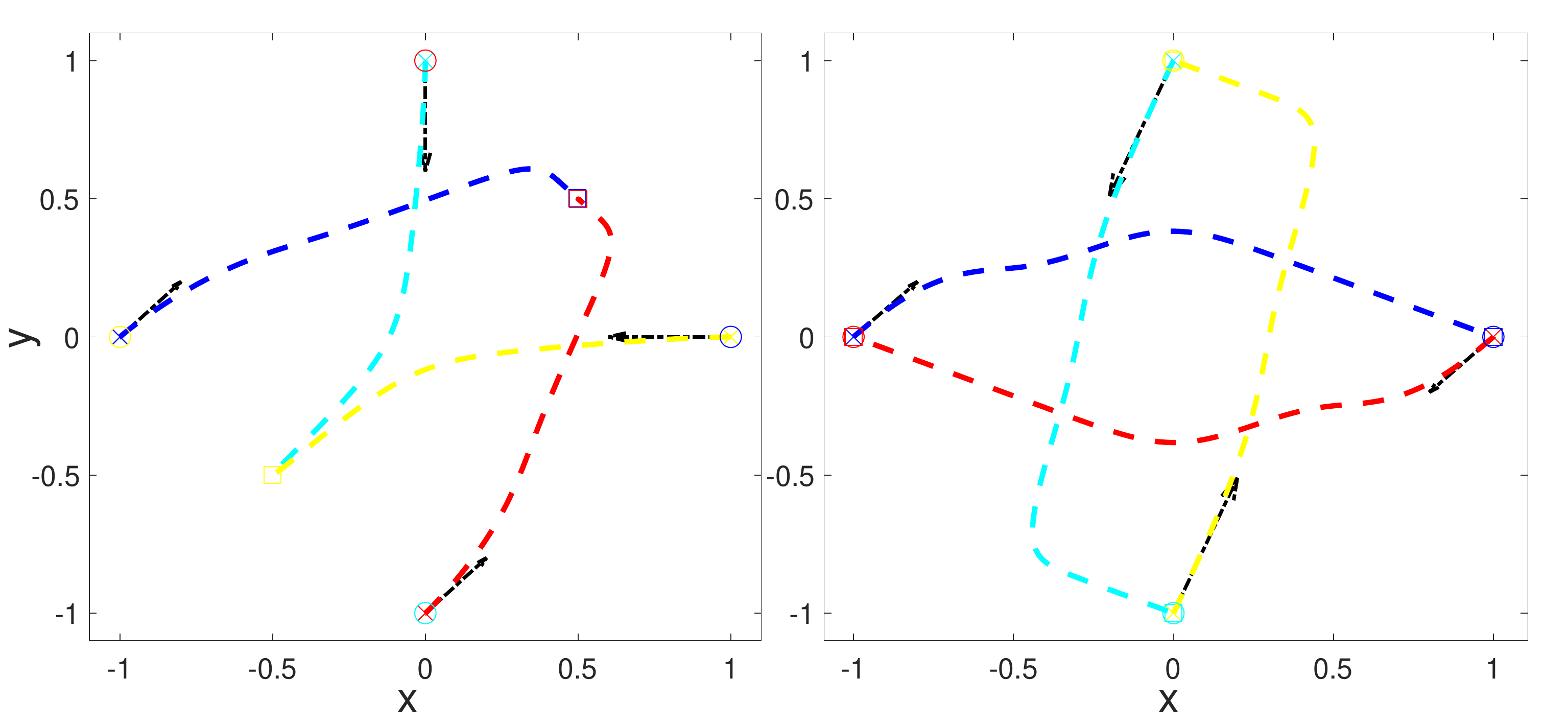}\\
		\caption{4 Particles starting on the vertices of a square and their trajectories. Crosses: Initial particle positions. Circles: Desired final pattern. Squares: Final particle positions. Black dashed vectors: Initial accelerations of the particles.} \label{fig:degenerate2d}
	\end{center}
\end{figure} 
	
The key difference resides in the symmetry of the initial accelerations, which in this case are completely determined by the control (as every agent starts at rest). It can be easily checked that for the first case $u_1(0)=u_4(0)=-0.5(u_2(0)+u_3(0))\neq \vec{0}$ and given the symmetry of the initial conditions, the controls and the influence term, the vectors $d(v_1(t)+v_4(t))/dt$ and $d(v_2(t)+v_3(t))/dt$ will remain in the same direction for all $t$. We can observe in Figure \ref{fig:degenerate2d} that, due to the symmetry of the acceleration terms and initial conditions, $r_{12}(t)=r_{34}(t)$ and $r_{13}(t)=r_{24}(t)$ for all $t$. Additionally, the symmetry also suggests that $x_1(t)-x_2(t)-z_1=x_3(t)-x_4(t)-z_3$ for all $t$. With this, the center of mass of the pair of agents with indexes 1 and 4 will stay on the line that crosses the origin with slope 1. The only option for the agents to try and reach the desired pattern is for them to collide in pairs somewhere on this line.

On the contrary, in the second case $u_1(0)+u_4(0)=-(u_2(0)+u_3(0))=\vec{0}$ but all initial accelerations are different. The centre of mass of agents 1 and 4 travels along the $x$ axis (and the centre of mass of agents 2 and 3 travels along the $y$ axis), but since the initial accelerations of these agents force them to different hemispheres, they can switch positions. The only possibility for collisions would be among particles that are not switching places, which is avoided by the lack of symmetry in the initial accelerations.

These particular cases are easily avoided by small perturbations on the initial conditions, or with selections of the control that take into account the initial configuration of the agents. In applications, where sensors and actuators are imperfect, it is unlikely that these exceptions could occur. However, if velocities are reaching consensus but the agents are still not close to the desired formation, the values $z_i$ could be perturbed slightly in orthogonal or random directions, in order to avoid cases such as this example. In a similar vein, the presence of more agents or higher dimensions would also tend to decrease the likeliness of such degenerate cases. Considering the several different cases that we have experimented with, it seems that condition \ref{noas} is satisfied most of the time.

\paragraph{\textbf{The Olympic Rings at PyeongChang 2018}} In Figure \ref{fig:snaprings} we present time snapshots of the trajectories followed by a system of 50 agents in the 3 dimensional space. The parameters are chosen as before but with the sequence $z_i$ selected to obtain a final spatial pattern that describes the Olympic Rings. At each snapshot we plot the position of every agent as points and their instantaneous velocities as vectors. Initially, the agents are located at random positions and satisfying $v_c(0)=\vec{0}\in\mathbb{R}^3$. Moreover, we plot the desired final pattern, the Olympic Rings formation, as empty circles on the plane with the third coordinate equal to $0$.

It is possible to observe for some agents that at $t=0.5[s]$ the magnitudes of their velocities are greater than their initial ones. This is not, however, inconsistent with $dv_c(t)/dt=0$. For $t=5[s]$ the agents are approaching the plane where the final formation resides in, although the projection of all of the agents positions on this plane is not the desired formation yet. At $t=200[s]$ we observe the desired pattern with the velocities of the agents vanished almost completely. We invite the reader to watch a full animation of the trajectories (with some time scaling for brevity) at \href{https://youtu.be/C7UDGRudsyA}{https://youtu.be/C7UDGRudsyA}.

\begin{figure*}\begin{center}
	\includegraphics[width=\textwidth]{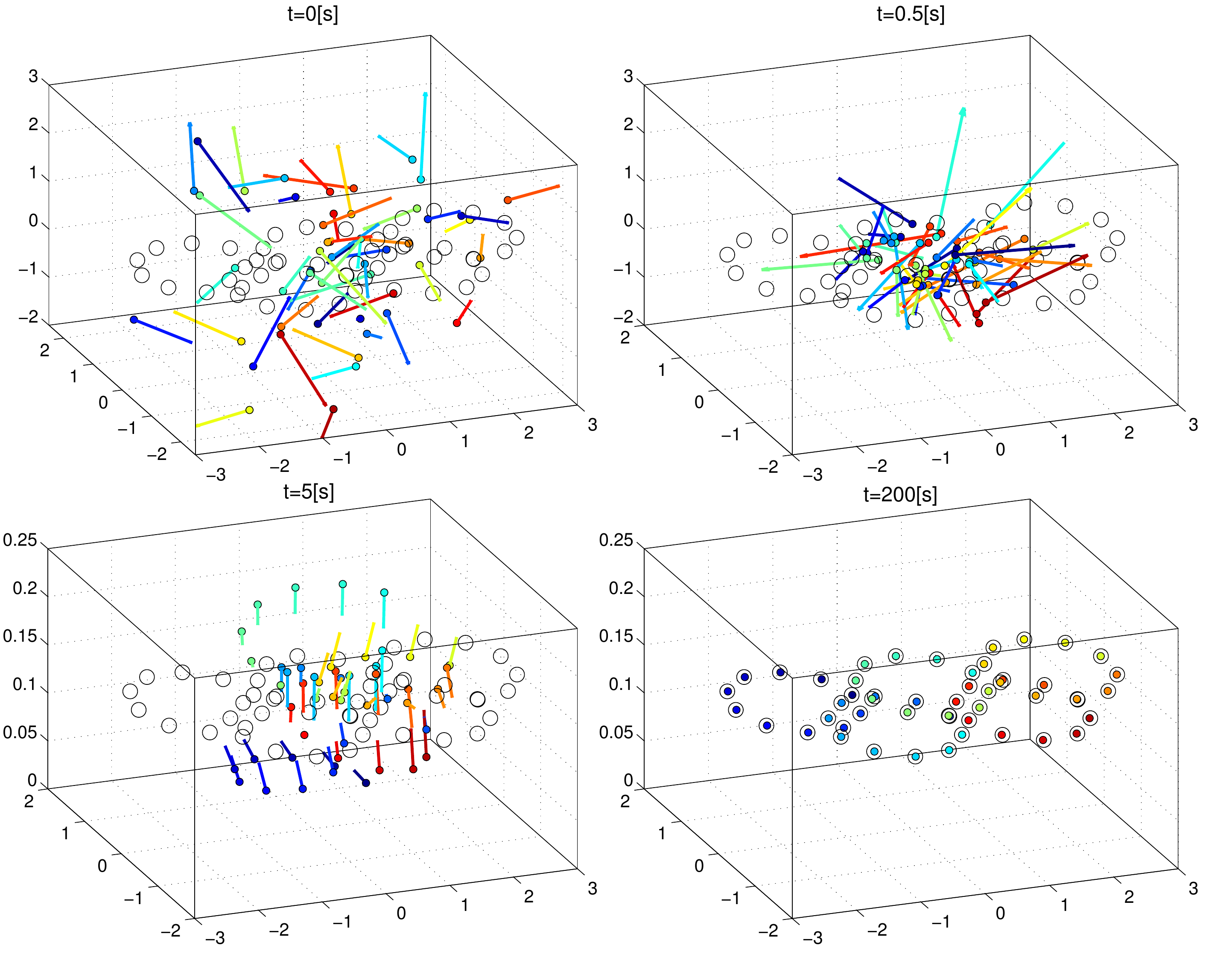}\\
	\caption{Snapshots at different times for an \emph{Olympic rings} pattern} \label{fig:snaprings}
\end{center}
\end{figure*}

%
%
%
%
\section*{Conclusions and future work}
We have presented a decentralized feedback law for formation control over a singular Cucker-Smale type model. By using a reference vector between agents, we are able to enforce pattern formations by means of decentralized control action. An energy analysis shows the collision-avoidance feature of the closed-loop dynamics in dimensions higher than 1. There are different extensions of the present work we are currently addressing: the design of collision-avoidance control laws for the non-singular Cucker-Smale model, the study of alternative decentralization strategies (e.g. by sparsification of a centralized controller), the stabilization towards time-dependent reference trajectories, and the understanding of formation control at a mean-field level. 
%
%
%
%
\section*{Acknowledgements}
YPC was supported by NRF grant (2017R1C1B2012918 and 2017R1A4A1014735) and POSCO Science Fellowship of POSCO TJ Park Foundation. APR was supported by the Advanced Center for Electrical and Electronic Engineering, Basal Project FB0008, and by the Grant FONDECYT 3160738, CONICYT Chile. JP was supported by the Polish MNiSW grant Mobilno\' s\' c Plus no. 1617/MOB/V/2017/0 and by the NSF grant RNMS11-07444 (KI-Net). DK and AP wish to dedicate this paper to the memory of Mario Salgado Brocal.

\bibliographystyle{siamplain}
\bibliography{bibckpp}

\end{document}